\begin{document}
\newcommand{\n}{\noindent}
\newtheorem{thm}{Theorem}[section]
\newtheorem{lem}[thm]{Lemma}
\newtheorem{prop}[thm]{Proposition}
\newtheorem{cor}[thm]{Corollary}
\newtheorem{con}[thm]{Conjecture}
\newtheorem{claim}[thm]{Claim}
\newtheorem{obs}[thm]{Observation}
\newtheorem{definition}[thm]{Definition}
\newtheorem{example}[thm]{Example}
\newtheorem{rmk}[thm]{Remark}
\newcommand{\di}{\displaystyle}
\def\dfc{\mathrm{def}}
\def\cF{{\cal F}}
\def\cH{{\cal H}}
\def\cT{{\cal T}}
\def\C{{\mathcal C}}
\def\cA{{\cal A}}
\def\cB{{\mathcal B}}
\def\P{{\mathcal P}}
\def\Q{{\mathcal Q}}
\def\cP{{\mathcal P}}
\def\cp{\alpha'}
\def\Frk{F_k^{2r+1}}

\title{A relaxation of the Bordeaux Conjecture}
\author{Runrun Liu$^{\dagger}$ and Xiangwen Li$^{\dagger}$ and
Gexin Yu$^{\dagger}\ ^{\ddagger}$}\thanks{The second author was supported by the
Natural Science Foundation of China (11171129)  and by Doctoral Fund of Ministry of Education of China (20130144110001);
 The third author's research was supported in part by NSA grant H98230-12-1-0226.}

\address{$^{\dagger}$ Department of Mathematics, Huazhong Normal University, Wuhan, 430079, China}

\address{$^{\ddagger}$ Department of Mathematics, The College of William and Mary,
Williamsburg, VA, 23185, USA.  }

\email{xwli68@mail.ccnu.edu.cn, gyu@wm.edu}

\date{\today}
\maketitle

\begin{abstract}
A $(c_1,c_2,...,c_k)$-coloring of a graph $G$ is a mapping $\varphi:V(G)\mapsto\{1,2,...,k\}$ such that for every $i,1 \leq i \leq k$, $G[V_i]$ has maximum degree at most $c_i$, where $G[V_i]$ denotes the subgraph induced by the vertices colored $i$. Borodin and Raspaud conjecture that every planar graph with neither $5$-cycles nor  intersecting triangles is $3$-colorable. We prove in this paper that every planar graph with neither $5$-cycles nor intersecting triangles is (2,0,0)-colorable.
\end{abstract}

\section{Introduction}

It is well-known that the problem of deciding whether a planar graph is properly $3$-colorable is NP-complete.  Gr\"{o}tzsch \cite{G59} proved the famous theorem that every triangle-free planar graph is $3$-colorable.   A lot of research has been devoted to find sufficient conditions for a planar graph to be $3$-colorable, by allowing a triangle together with some other conditions.  One of such efforts is the following famous conjecture made by Steinberg  \cite{S76}.

\begin{con}[Steinberg, \cite{S76}]
All planar graphs with neither $4$-cycles nor $5$-cycles are $3$-colorable.
\end{con}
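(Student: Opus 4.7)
The plan is a contradiction argument built on a minimum counterexample, with reducible configurations eliminated by minimality and a discharging phase producing the contradiction. Let $G$ be a plane graph with neither $4$- nor $5$-cycles that is not $3$-colorable, chosen with $|V(G)|+|E(G)|$ minimum. Minimality gives $G$ connected with $\delta(G)\ge 3$, and standard splitting arguments upgrade this to $2$-connectivity. I would then assemble a catalogue of forbidden local substructures. The engine behind each reducibility lemma is the same: delete or identify a small configuration $H$, apply the minimality hypothesis to obtain a proper $3$-coloring $\varphi$ of the smaller graph $G'$, and extend $\varphi$ to $G$, either directly when the boundary colors leave a free color at each vertex of $H$, or by Kempe swaps when they do not. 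Natural candidates tailored to ``no $4$- or $5$-cycle'' include: a $3$-vertex with two $3$-neighbors, a triangle sharing a vertex with another triangle whose remaining two vertices both have degree $3$, and short paths between low-degree vertices that the forbidden cycles prevent from closing into $4$- or $5$-circuits in $G'$.

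Next I would run a discharging argument. Assign initial charge $\mu(v)=d(v)-4$ to each $v\in V(G)$ and $\mu(f)=\ell(f)-4$ to each face $f$, so that $\sum \mu=-8$ by Euler's formula. The only negative elements are $3$-vertices and $3$-faces, each carrying charge $-1$; since $G$ has no $4$- or $5$-cycle, every non-triangular face has length at least $6$ and thus charge at least $2$. I would design rules in which each $6^+$-face distributes surplus to incident triangles across shared edges, each vertex of degree at least $5$ contributes to incident triangles, and $3$-vertices pull charge from large faces in their second neighborhood. The reducibility lemmas would then guarantee enough local room for every triangle and every $3$-vertex to receive sufficient compensation, and verifying nonnegative final charge at each element would contradict $\sum \mu=-8$.

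The hard part is building the right set of reducible configurations. Two triangles sharing a single vertex (a ``bowtie'') is permitted by the hypothesis, and around such a cluster the surrounding $6^+$-faces can be too far away for standard rules to pump in the charge required; worse, bowtie-like patterns often admit $3$-colorings of $G-H$ that genuinely fail to extend to $G$ without Kempe swaps whose orbits must be tracked globally. This is precisely why the Bordeaux weakening discussed in the abstract forbids intersecting triangles, and why partial Steinberg-type theorems typically either exclude additional cycle lengths or impose explicit triangle-spread hypotheses. A proof of the full conjecture would almost certainly require a genuinely new reducibility idea --- perhaps a non-local color-class invariant, a charge redistribution across distance-$2$ triangle chains, or a passage through list- or DP-coloring --- capable of eliminating bowtie and near-bowtie obstructions. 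I would therefore invest the bulk of the effort in these configurations; once they are shown reducible, the discharging phase itself should be comparatively routine.
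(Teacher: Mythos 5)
There is a genuine gap, and in fact the premise of the exercise is off: the statement you are ``proving'' is Steinberg's Conjecture, which this paper does not prove --- it is quoted only as motivation, and the paper's actual theorem is a $(2,0,0)$-coloring relaxation of the different (Bordeaux) conjecture, for graphs with no $5$-cycles and no intersecting triangles. So there is no proof in the paper to compare against, and your text does not supply one either. Your own write-up concedes this: after setting up the standard minimal-counterexample framework and the charge assignment $\mu(v)=d(v)-4$, $\mu(f)=\ell(f)-4$ with total $-8$, you state that the bowtie and near-bowtie configurations are the obstruction and that handling them ``would almost certainly require a genuinely new reducibility idea.'' That is an accurate description of why the conjecture resisted proof, but it means the argument has a hole exactly at the step that carries all the difficulty; everything you do complete (minimum degree $3$, connectivity, the shape of the discharging rules) is routine and was never the issue.

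The gap is not merely unfilled but unfillable: Steinberg's Conjecture is false. Cohen-Addad, Hebdige, Kr\'al', Li and Salgado constructed a planar graph with no $4$- or $5$-cycles that is not $3$-colorable, so no catalogue of reducible configurations can ever close your discharging argument --- the minimal counterexample you posit actually exists. If your goal is to engage with this paper, the productive target is its Theorem~2 (superextendability of precolored triangles and $7$-cycles under a $(2,0,0)$-coloring), where the deficiency $2$ in one color class is precisely what makes the extension lemmas (the paper's Lemmas~3.8--3.13) go through where a proper $3$-coloring version would fail.
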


Some progresses have been made towards this conjecture, along two directions.  One direction was suggested by Erd\H{o}s to find a constant $c$ such that a planar graph without cycles of length from $4$ to $c$ is $3$-colorable.  Borodin, Glebov, Raspaud, and Salavatipour~\cite{BGRS05} showed that $c\le 7$.  For more results, see the recent nice survey by Borodin~\cite{B12}.

Another direction of relaxation of the conjecture is to allow some defects in the color classes.  A graph is {\em $(c_1, c_2, \cdots, c_k)$-colorable} if the vertex set can be partitioned into $k$ sets $V_1,V_2, \ldots, V_k$, such that for every $i: 1\leq i\leq k$ the subgraph $G[V_i]$ has maximum degree at most $c_i$.  Here, $c_i$ is the {\em deficiency} of color $i$.   Thus a $(0,0,0)$-colorable graph is properly $3$-colorable. 
Chang, Havet, Montassier, and Raspaud~\cite{CHMR11} proved that all planar graphs with neither $4$-cycles nor $5$-cycles are $(2,1,0)$-colorable and $(4,0,0)$-colorable.  In~\cite{HSWXY13, HY13, XMW12}, it is shown that planar graphs with neither $4$-cycles nor $5$-cycles are $(3,0,0)$- and $(1,1,0)$-colorable.

Havel \cite{H69} asked if each planar graph with a large enough minimum distance $d^{\bigtriangledown}$ between triangles is $3$-colorable.  This was resolved in a recent preprint of Dvo\v{r}\'ak, Kr\'al and Thomas~\cite{DKT09}.  Borodin and Raspaud in 2003 made the following Bordeaux Conjecture
, which has common features with Havel's (1969) and Steinberg's (1976) 3-color problems.

\begin{con}[Borodin and Raspaud, \cite{BR03}] \label{con1}
Every planar graph with $d^{\bigtriangledown}\ge 1$ and without $5$-cycles is $3$-colorable.
\end{con}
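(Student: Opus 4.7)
The plan is to argue by contradiction, taking a counterexample $G$ that is minimum with respect to $|V(G)|+|E(G)|$: a planar graph with $d^{\bigtriangledown}(G)\ge 1$ (no two triangles share a vertex), no $5$-cycle, and no proper $3$-coloring. Fix a plane embedding. By the minimality of $G$, any proper subgraph obtained by a local reduction admits a proper $3$-coloring that we will attempt to extend; the entire argument is built around identifying local configurations for which such an extension must exist, contradicting the choice of $G$.

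First I would establish the usual connectivity and light-vertex reductions: $G$ is $2$-connected and $\delta(G)\ge 3$, every $3$-vertex has all neighbors of degree $\ge 3$, and triangles are pairwise vertex-disjoint by hypothesis. The next step is to build a catalogue of \emph{reducible configurations} around the two possible short faces, namely $3$-faces and $4$-faces (length $5$ is forbidden). Typical targets would be: a $3$-face incident with a $3$-vertex whose third neighbor is of low degree; a $4$-face two of whose vertices are $3$-vertices; a $3$-face adjacent (across an edge) to a $4$-face whose far side carries a $3$-vertex; and chains of adjacent $4$-faces terminating at a triangle. For each, I would delete or identify vertices to get a smaller plane graph that still has $d^{\bigtriangledown}\ge 1$ and no $5$-cycle (so minimality applies), $3$-color it, and show via a short case analysis of the colors on the boundary that the coloring extends to $G$. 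The no-intersecting-triangles hypothesis is crucial here: around each triangle the local structure is tree-like at the level of triangles, which keeps the number of partial colorings to analyze bounded.

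The second phase is discharging. Assign $\mu(v)=d(v)-4$ to $v\in V(G)$ and $\mu(f)=d(f)-4$ to each face $f$, so that $\sum\mu=-8$ by Euler's formula. Since $5$-cycles are forbidden, every face of length $\ge 5$ has length $\ge 6$ and so carries charge $\ge 2$. The rules should send charge from $\ge 6$-faces and $\ge 5$-vertices to $3$-faces, $4$-faces, and $3$-vertices, in amounts calibrated by the configurations ruled out in the first phase. For example, each $\ge 6$-face sends a fixed portion of its charge across each incident edge to a neighboring short face, and each $\ge 5$-vertex donates to incident $3$-faces and to incident $3$-vertices via common short faces. The aim is $\mu^\ast(x)\ge 0$ for every vertex and face, giving $0\le \sum\mu^\ast=\sum\mu=-8$, the desired contradiction.

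The hard part, and the reason this conjecture has resisted proof, will be the reducibility step for $4$-face clusters. Because $4$-cycles are permitted, a triangle may be the endpoint of a long alternating path of $4$-faces sharing edges with low-degree vertices, and any reduction must be checked to neither create a new $5$-cycle nor identify two vertices of an existing triangle (which would destroy $d^{\bigtriangledown}\ge 1$). Proving that every such $4$-face chain must contain a configuration reducible under both constraints simultaneously is the genuine obstacle; the discharging arithmetic is standard once a sufficiently rich list of reducible configurations is available, but producing that list without accidentally invoking a forbidden $5$-cycle or a triangle collision is precisely where I expect the work to concentrate.
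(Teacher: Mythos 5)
This statement is an open conjecture, and the paper does not prove it: the paper establishes only the relaxation that graphs in $\mathcal{G}$ are $(2,0,0)$-colorable, and the known results toward the conjecture itself require the much stronger hypothesis $d^{\bigtriangledown}\ge 2$ (Borodin--Glebov). Your proposal is therefore attempting something strictly harder than anything in the paper, and it does not succeed: what you have written is a description of the standard discharging template, not a proof. Every load-bearing component is left unspecified --- the actual catalogue of reducible configurations, the verification that each reduction preserves membership in $\mathcal{G}$ and that the $3$-coloring extends, and the discharging rules with the case check that every vertex and face ends with nonnegative charge. You acknowledge this yourself when you write that the reducibility step for $4$-face clusters is ``the genuine obstacle''; that obstacle is the entire content of the problem, and naming it is not the same as overcoming it.

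There is also a structural reason to doubt that the template as you describe it can be completed for proper $3$-coloring. The paper's reductions lean heavily on the slack provided by the deficiency: a vertex colored $1$ may tolerate up to two neighbors of color $1$, which is what makes arguments like Lemmas~3.8--3.13 (recolor a low-degree neighbor ``nicely'' and push color $1$ onto an uncolored vertex) go through. In a proper $3$-coloring there is no such slack, so after deleting or identifying vertices around a $3$-face or a $4$-face one generically faces a neighbor already using all three colors, with no local repair available. This is precisely why the community has only managed $(1,1,1)$-, $(1,1,0)$- and now $(2,0,0)$-colorability for this class, and why the $d^{\bigtriangledown}\ge 1$ case of proper $3$-colorability remains open. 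To turn your outline into a proof you would need either genuinely new reducible configurations that work without deficiency, or a different global mechanism (e.g.\ the precoloring-extension framework of Dvo\v{r}\'ak, Kr\'al and Thomas), neither of which your sketch supplies.
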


A relaxation of the Bordeaux Conjecture with $d^{\bigtriangledown}\ge 4$ was confirmed  by Borodin and Raspaud~\cite{BR03}, and the result was improved to $d^{\bigtriangledown}\ge 3$ by Borodin and Glebov  \cite{BG04} and, independently, by  Xu \cite{X07}.     Borodin and Glebov \cite{BG11} further improved the result to $d^\bigtriangledown\ge2$.

In terms of relaxed coloring,  Xu~\cite{X08} proved that all planar graphs with neither adjacent triangles nor  $5$-cycles are $(1,1,1)$-colorable, where two triangles are adjacent if they share an edge.

In this paper, we consider another relaxation of the Bordeaux Conjecture. Let $\mathcal{G}$ be the family of plane graphs with $d^{\bigtriangledown}\ge 1$ and without $5$-cycles.   Yang and Yerger~\cite{YY14} showed that
planar graphs in $\mathcal{G}$ are $(4,0,0)$- and $(2,1,0)$-colorable, but there is a flaw in one of their key lemmas (Lemma~2.4)\footnote{According to private communication with Yerger, they may have a way to fix the gap in their proofs.}.   In \cite{LLY05}, we showed that graphs in $\mathcal{G}$ are $(1,1,0)$-colorable. We prove the following result.

\begin{thm}\label{main-1}
Every planar graph in $\mathcal{G}$ is $(2,0,0)$-colorable.
\end{thm}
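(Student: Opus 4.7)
The plan is a standard minimum-counterexample plus discharging argument. Let $G$ be a counterexample to Theorem~\ref{main-1} with $|V(G)|+|E(G)|$ minimum. Then $G$ is a plane graph in $\mathcal{G}$ that is not $(2,0,0)$-colorable, while every proper subgraph of $G$ in $\mathcal{G}$ is. The coloring constraint is that color class $1$ has maximum degree at most $2$ while classes $2$ and $3$ are independent sets; I will call a vertex colored with $1$ \emph{light} and a vertex colored with $2$ or $3$ \emph{heavy}.

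First I would establish reducible configurations via extension arguments. The minimality of $G$ should immediately force $\delta(G)\ge 3$: a vertex $v$ of degree at most $2$ can be extended from a coloring of $G-v$, because there are at most two forbidden colors at $v$, and if both forbidden colors are from heavy constraints we still have color~$1$ available (with at most two neighbors, the degree constraint in class~$1$ is automatically satisfied). The harder reducibility claims are the list of local structures to rule out; these typically include (i) a $3$-vertex adjacent to another $3$-vertex on a triangle, (ii) a $3$-vertex with two $3$-neighbors, (iii) a $4$-vertex incident to a triangle with two $3$-neighbors outside the triangle, and similar small configurations. Each such claim is proved by fixing a $(2,0,0)$-coloring of a carefully chosen smaller graph (usually $G$ minus a vertex or with a small identification/contraction that preserves membership in $\mathcal{G}$, which uses the no-$5$-cycle and $d^{\bigtriangledown}\ge 1$ hypotheses), and then showing that the removed/identified vertices can be recolored. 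The casework will exploit that at a $3$-vertex at most three neighbors contribute forbidden heavy colors, so color~$1$ is available unless too many neighbors are already light with full degree in class~$1$; by swapping colors on a neighbor one can typically free up color~$1$.

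Next I would run discharging. Assign $\mu(v)=d(v)-4$ to each vertex and $\mu(f)=d(f)-4$ to each face; by Euler's formula $\sum_v \mu(v)+\sum_f \mu(f)=-8<0$. Since $G$ has no $5$-cycles and $d^{\bigtriangledown}\ge 1$, every $3$-face is surrounded only by faces of length $\ge 6$ (no adjacent triangles and no $4$- or $5$-faces sharing an edge with a triangle, by the no-$5$-cycle hypothesis and the isolation of triangles). Natural rules send charge from big faces to incident triangles, and from $\ge 5$-vertices to adjacent $3$-vertices and to incident triangles; for example, each $6^+$-face gives $1$ to each incident $3$-face and some fraction to incident short faces, while each vertex $v$ with $d(v)\ge 5$ sends a fixed amount to each incident $3$-face and each adjacent $3$-vertex. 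Using the reducible configurations from the previous step, I would verify face by face and vertex by vertex that the final charge is $\ge 0$, giving the desired contradiction.

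The main obstacle will be the reducibility step, particularly for configurations centered at $4$-vertices incident to triangles, where the $(2,0,0)$ asymmetry is delicate: even if one can color $G-v$, the neighbor of $v$ that is light and has exactly two light neighbors becomes a bottleneck, because recoloring it may propagate through the triangle and violate the heavy-color independence. Handling this likely requires introducing a few auxiliary identifications (merging two non-adjacent neighbors of $v$ that lie on the triangle) and checking that no new $5$-cycle and no intersecting triangles are created; the $d^{\bigtriangledown}\ge 1$ and no-$5$-cycle hypotheses together with the planarity of $G$ should make this possible, but the bookkeeping is the technical heart of the proof. Once enough such reducible configurations are in hand, choosing the discharging coefficients is routine but tedious.
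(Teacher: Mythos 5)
There is a genuine gap, and it is structural rather than a matter of bookkeeping. Your plan takes a minimal counterexample to the plain statement ``$G\in\mathcal{G}$ is $(2,0,0)$-colorable,'' but that induction hypothesis is too weak to carry out the reductions you describe. The paper instead proves the stronger Theorem~\ref{main} (superextendability: any $(2,0,0)$-coloring of a precolored triangle or $7$-cycle extends to $G$ with the new vertices colored differently from their precolored neighbors), and this is not a cosmetic choice. First, a minimal counterexample must be cleaned of separating triangles and separating $7$-cycles (Lemma~\ref{lem3.2}): one colors $G-int(C)$ and then extends the induced coloring of $C$ inward, which is exactly the extension property; with only the plain statement you get some coloring of $C\cup int(C)$ that need not agree with the outer coloring on $C$, and permuting colors cannot reconcile them (nor the deficiency of color $1$). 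Second, the ``auxiliary identifications'' you correctly anticipate as the technical heart (merging opposite vertices of a $4$-face, Lemmas~\ref{lem3.6}, \ref{lem3.7}, \ref{lem3.12}(3)) are only legitimate because a $5$-path between the identified vertices would close a $7$-cycle that, by the absence of separating $7$-cycles, cannot exist there; without the superextendability framework you have no way to exclude such $7$-cycles, so you cannot certify that the identified graph stays in $\mathcal{G}$. This is precisely why the whole literature on the Bordeaux and Steinberg relaxations works with a precolored face and an ``extension'' theorem.

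Beyond that, several of your proposed reducible configurations are not reducible for $(2,0,0)$-coloring: two adjacent $3$-vertices on a triangle are not reducible when the third vertex of the triangle has degree $\ge 6$ and the pendant neighbors have large degree (the paper handles $(3,3,6)$- and $(3,3,7^+)$-faces by discharging, not reducibility; only $(3,3,5^-)$-faces with a low-degree pendant neighbor are reducible, Lemma~\ref{lem3.9}), and a $3$-vertex with two $3$-neighbors is likewise not reducible unless \emph{all} its neighbors have degree at most $4$ (Lemma~\ref{lem3.8}). The configurations that actually carry the proof concern $4$-faces interacting with $5$- and $6$-vertices and the recursively defined special $(3,5,5)$-faces (Lemmas~\ref{lem3.10}--\ref{lem3.13}); none of these appear in your plan. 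Finally, the charge assignment $\mu(v)=d(v)-4$, $\mu(f)=d(f)-4$ is unlikely to close: it makes $4$-faces neutral and gives a $5$-vertex only $+1$, while in this problem $5$-vertices must feed several pendant $3$-faces and incident $4$-faces (the paper uses $\mu(v)=2d(v)-6$, $\mu(f)=d(f)-6$, with the precolored outer face absorbing the surplus $+12$); also a $6$-face has only $+2$ in your scheme but can be edge-adjacent to three vertex-disjoint triangles each needing charge. So the step you call ``routine but tedious'' is in fact where the presence of $4$-cycles forces both a different charging and the extension-form theorem.
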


In fact, we will prove a stronger result. 
Let $G$ be a graph and $H$ be a subgraph of $G$.  We call $(G,H)$ {\em superextendable} if any $(2,0,0)$-coloring of $H$ can be extended to $G$ so that the vertices in $G-H$ have different colors from their neighbors in $H$;  in this case, we call $H$ a superextendable subgraph.

\begin{thm}\label{main}
Every triangle or $7$-cycle of a planar graph in $\mathcal{G}$ is superextendable.
\end{thm}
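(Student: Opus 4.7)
The plan is to argue by minimum counterexample with a discharging argument, following the standard paradigm used in preceding papers on the Bordeaux Conjecture (e.g.\ \cite{BG11,X08}) and in the authors' companion paper \cite{LLY05}. Suppose the theorem fails and let $(G,H)$ be a counterexample minimizing $|V(G)|+|E(G)|$; here $G\in\mathcal{G}$, $H$ is either a $3$-cycle or a $7$-cycle, and some $(2,0,0)$-coloring $\varphi$ of $H$ admits no superextension to $G$. Routine minimality gives first-level properties: $G$ is $2$-connected, $H$ is induced, $G$ embeds in the plane so that $H$ bounds a face (so a triangle or a $7$-cycle does not sit inside another small cycle in a way that we would have to analyze), and every vertex of $V(G)\setminus V(H)$ has degree at least $3$, since a vertex of degree at most $2$ outside $H$ could be deleted and extended last.

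Next I would build a list of reducible configurations -- forbidden local substructures around internal $3$-vertices, short faces, and the precolored cycle $H$. The general recipe is: delete or identify a small set $S$ of vertices, verify that the resulting graph $G'$ is still in $\mathcal{G}$ and still contains a triangle or a $7$-cycle (either $H$ itself, or one produced by the identification) with a prescribed coloring compatible with $\varphi|_H$, apply minimality to superextend the coloring to $G'$, and then complete the coloring on $S$ by direct case analysis. The $(2,0,0)$-constraint is friendly here, because color $1$ may be repeated at up to two neighbors, so the forbidden lists at the reinserted vertices of $S$ are short. The configurations I expect to rule out include: an internal $3$-vertex incident to a $3$-face or to a $4$-face; two adjacent internal $3$-vertices sharing a short face; and various configurations of $3$-vertices whose neighborhood in $H$ would exhaust the palette on some triangle.

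With a sufficiently rich reducible list, the final step is a discharging argument based on Euler's formula. I would set the initial charge at each vertex $v$ to $d(v)-4$ and at each face $f$ to $d(f)-4$, so the total charge is $-8$. A short list of rules then moves charge from faces of length at least $6$ and from vertices of degree at least $5$ towards $3$-vertices, $3$-faces, and $4$-faces. Using the reducible configurations together with $d^{\bigtriangledown}\ge 1$ (so that no $3$-face touches another $3$-face) and the absence of $5$-cycles (so adjacent short faces are constrained), I would verify face-by-face and vertex-by-vertex that the final charge is nonnegative away from $H$ and bounded on $H$, contradicting the total of $-8$.

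The main obstacle is the $7$-cycle case combined with the absence of $5$-cycles. Because $\mathcal{G}$ forbids $5$-cycles but allows arbitrarily many $7$-cycles and $4$-cycles, every reducibility step -- especially one that identifies two vertices near $H$ or removes a chord -- must be argued to not create a $5$-cycle or a pair of intersecting triangles in $G'$; otherwise $G'\notin\mathcal{G}$ and the inductive hypothesis is unavailable. This forces reducible configurations that abut $H$ to be stated very precisely, and makes the discharging rules distinguish faces and vertices near $H$ from generic ones. A secondary subtlety is that the ``super'' part of the statement -- that the extension must disagree with $H$'s colors at neighbors of $H$ -- is exactly what makes the inductive step go through when a new triangle or $7$-cycle is produced after identification; thus the strengthening in Theorem~\ref{main} is essential rather than cosmetic, and the bulk of the case analysis will revolve around maintaining this stronger invariant under every reduction.
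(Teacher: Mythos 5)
Your outline follows the same general paradigm as the paper (minimal counterexample, reducible configurations obtained by deletion/identification, then discharging), but as it stands it has a genuine gap: the configurations you propose to rule out are not in fact reducible, and the part of the argument where all the real difficulty lives is left unspecified. For a $(2,0,0)$-coloring only one color class tolerates defects, so an internal $3$-vertex lying on a $3$-face (or on a $4$-face) cannot simply be deleted and recolored: after extending to $G-v$ the three neighbors of $v$ may be properly colored with three distinct colors and none of them recolorable. The paper accordingly proves much weaker and more carefully quantified statements -- e.g.\ that a $3$-vertex off $C_0$ has a neighbor on $C_0$ or a $5^+$-neighbor (its Lemma~\ref{lem3.8}), that the pendant neighbor of a $3$-vertex on a $(3,3,5^-)$-face has degree at least $5$ (Lemma~\ref{lem3.9}), and a whole family of lemmas (\ref{lem3.10}--\ref{lem3.13}) about $4$-, $5$- and $6$-vertices carrying several \emph{pendant} special $3$-faces, where ``special'' $(3,5,5)$-faces are defined \emph{recursively}. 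This recursive notion, and the fact that the defective color can be pushed onto the $3$-vertices of pendant special faces (Lemma~\ref{lem3.10}), is the key idea that makes the charge balance work at $(3,3,5^-)$-, $(3,4,4)$- and $(3,5,5)$-faces; nothing in your sketch supplies a substitute for it, and with only the configurations you list the discharging cannot close (a $(3,3,5)$-face would end up negatively charged under any reasonable rule set unless such structural information is available).

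Two further points where your plan is under-specified but the paper must work hard: (i) the identification reductions for $4$-faces (Lemmas~\ref{lem3.6} and~\ref{lem3.7}) require not only that no $5$-cycle or intersecting triangles are created -- which you correctly flag -- but also a prior analysis of separating $3$-, $4$- and $7$-cycles (Lemmas~\ref{lem3.2} and~\ref{lem3.3}), including the curious exceptional separating $4$-cycle with exactly two exterior vertices, and a count showing $\sigma$ strictly decreases even when the identified graph is compared against a $7$-cycle $C_0$; and (ii) the discharging must treat the precolored cycle $C_0$ asymmetrically (the paper gives $C_0$ charge $d(C_0)+6$ and has separate rules R2, R3 for vertices on $C_0$ and for faces meeting $C_0$), since vertices of degree $2$ and $3$ on $C_0$ are permitted and must be paid for; your uniform $d-4$ normalization with a vague ``bounded on $H$'' clause does not address how the boundary deficit is absorbed. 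In short, the skeleton is right, but the statements you would need to prove are different from the ones you propose, and the discharging cannot be completed without the special-face machinery and the boundary bookkeeping.
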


To see the truth of Theorem~\ref{main-1} by way of Theorem~\ref{main},  we may assume that the planar graph contains a triangle $C$ since $G$ is 3-colorable if $G$ has no triangle. Then color the triangle, and by Theorem~\ref{main}, the coloring of $C$ can be superextended to $G$. Thus, we get a coloring of $G$.

We will use a discharging argument to prove Theorem~\ref{main}, that is, we consider a minimal counterexample and assign an initial charge to each vertex and face so that the sum is $0$. We shall design some rules to redistribute the charges among vertices and faces so that  some local sparse structures appear, or otherwise all vertices and faces would have non-negative and at least one has a positive final charges. We will then show that the coloring outside the sparse structures can be extended to include all vertices in the graph (that is, the local structure is reducible), to reach a contradiction.

As pointed out in \cite{YY14}, as we may have $4$-cycles in the considered graphs, the proof is quite different from the previous known relaxations of the Steinberg's Conjecture in terms of relaxed coloring.

The paper is organized as follows.  In Section~\ref{prelim}, we introduce some notations used in the paper.  In Section~\ref{reduce}, we show the reducible structures useful in our proof.  In Section~\ref{discharging}, we show the discharging process to finish the proof.

\section{Preliminaries}\label{prelim}
In this section, we introduce some notations used in the paper.

Graphs mentioned in this paper are all simple.  A $k$-vertex ($k^+$-vertex, $k^-$-vertex) is a vertex of degree $k$ (at least $k$, at most $k$). The same notation will be applied to faces and cycles. We use $b(f)$ to denote the vertex set of a face $f$. We use $F(G)$ to denote the set of faces in $G$. An $(l_1, l_2, \ldots, l_k)$-face is a $k$-face $v_1v_2\ldots v_k$ with $d(v_i)=l_i$, respectively. A face $f$ is a pendant $3$-face of vertex $v$ if $v$ is not on $f$ but is adjacent to some $3$-vertex on $f$. The pendant neighbor of a $3$-vertex $v$ on a $3$-face is the neighbor of $v$ not on the $3$-face.

Let $C$ be a cycle of a plane graph $G$. We use $int(C)$ and $ext(C)$ to denote the sets of vertices located inside and outside $C$, respectively. The cycle $C$ is called a {\em separating cycle} if $int(C)\ne\emptyset\ne ext(C)$, and is called a {\em nonseparating cycle} otherwise. We still use $C$ to denote the set of vertices of $C$.

Let $S_1, S_2, \ldots, S_l$ be pairwise disjoint subsets of $V(G)$. We use $G[S_1, S_2, \ldots, ,S_l]$ to denote the graph obtained from $G$ by identifying all the vertices in $S_i$ to a single vertex for each $i\in\{1, 2, \ldots, l\}$.  Let $x(y)$ be the vertex obtained by identifying $x$ and $y$ in $G$.

A vertex $v$ is {\em properly colored} if all neighbors of $v$ have different colors from that of $v$. A vertex $v$ is {\em nicely colored} if it shares a color (say $i$) with at most max$\{s_i-1,0\}$ neighbors, where $s_i$ is the deficiency allowed for color $i$. Thus if a vertex $v$ is nicely colored by a color $i$ which allows deficiency $s_i>0$, then an uncolored neighbor of $v$ can be colored by $i$.

\section{Reducible configurations}\label{reduce}
Let $(G, C_0)$ be a minimum counterexample to Theorem~\ref{main} with minimum $\sigma(G)=|V(G)|+|E(G)|$, where $C_0$ is a triangle or a $7$-cycle in $G$ that is precolored.

The following are some simple observations about $(G,C_0)$.

\begin{prop}\label{prop3.1}
(a) Every vertex not on $C_0$ has degree at least $3$.\\
(b) Every vertex in $G$ can have at most one incident $3$-face.\\
(c) No $3$-face and $4$-face in $G$ can have a common edge.
\end{prop}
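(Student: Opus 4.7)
The plan is to handle the three parts separately. Parts (b) and (c) are immediate structural consequences of the hypotheses that define $\mathcal{G}$, while part (a) is a routine minimum-counterexample reducibility argument.

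For part (a), I would proceed by contradiction: suppose $v \notin C_0$ has $d(v) \le 2$. Because $C_0 \subseteq G - v$ and the family $\mathcal{G}$ is closed under vertex deletion, the pair $(G - v, C_0)$ is a valid smaller instance with $\sigma(G - v) < \sigma(G)$, so by minimality of $(G, C_0)$ the precoloring of $C_0$ superextends to some coloring $\varphi$ of $G - v$. To extend $\varphi$ to $G$, I would color $v$ with any color in $\{1,2,3\}$ distinct from the at most two colors that $\varphi$ uses on $N(v)$; such a color exists by pigeonhole. Since $v$'s color differs from each neighbor's color, no monochromatic edge is created, so both the $(2,0,0)$ deficiency constraints and the requirement that vertices of $G - C_0$ receive colors distinct from their neighbors in $C_0$ are automatically preserved. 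This contradicts the non-superextendability of $(G, C_0)$.

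For part (b), the hypothesis defining $\mathcal{G}$ (equivalently $d^{\bigtriangledown}\ge 1$) forces triangles of $G$ to be pairwise vertex-disjoint. Since every $3$-face is bounded by a triangle, a vertex incident to two $3$-faces would lie in two triangles sharing (at least) that vertex, contradicting the hypothesis. For part (c), I would exhibit a forbidden $5$-cycle. Suppose a $3$-face with boundary $uvw$ shares the edge $uv$ with a $4$-face, and write the boundary of the $4$-face, read on the opposite side of $uv$, as $uyxv$. Then $wu, uy, yx, xv, vw$ are all edges of $G$ and form the closed walk $w\,u\,y\,x\,v\,w$ of length $5$; provided its five vertices are pairwise distinct, this is a $5$-cycle contradicting $G \in \mathcal{G}$.

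The only spot requiring a little care is verifying the distinctness in (c): a coincidence $w=x$ or $w=y$ would make the $3$-face and the $4$-face share a second edge, which is impossible in a simple plane graph (and would in any case create a second triangle sharing $u$ or $v$ with $uvw$, contradicting part (b)); while coincidences among $\{u,v,x,y\}$ are excluded by their being vertices of a face. Beyond this bookkeeping I do not foresee any real obstacle in any of the three parts.
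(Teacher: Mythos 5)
Your proof is correct and matches the intended justification: the paper states Proposition~\ref{prop3.1} without proof as immediate observations, and the evident reasoning is exactly yours---minimality plus proper coloring of a $2^-$-vertex for (a), absence of intersecting triangles for (b), and absence of $5$-cycles for (c). One cosmetic point: in (c), a $3$-face and a $4$-face sharing two edges is not literally impossible in a simple plane graph, but, as your parenthetical notes, the coincidence $w\in\{x,y\}$ creates a second triangle meeting $uvw$, and the contradiction should be with the hypothesis $d^{\bigtriangledown}\ge 1$ defining $\mathcal{G}$ rather than with part (b), since that second triangle need not bound a face.
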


Similar to the lemmas in \cite{X08}, we show Lemmas~\ref{lem3.2} to ~\ref{lem3.6}, which hold for all superextenable $(c_1,c_2,c_3)$-coloring of $G\in\mathcal{G}$.  The proofs are similar to those of \cite{X08}, and for completeness, we include the proofs here. If $C_0$ is a separating cycle, then $C_0$ is superextendable in both $G-ext(C_0)$ and $G-int(C_0)$. Thus, $C_0$ is superextendable in $G$, contrary to the choice of $C_0$. Thus, we may assume that $C_0$ is the boundary of the outer face of $G$ in the rest of this paper.

\begin{lem}\label{lem3.2}
The graph $G$ contains neither separating triangles nor separating $7$-cycles.
\end{lem}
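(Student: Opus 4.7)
The plan is a standard cut-and-paste along any hypothetical separating triangle or 7-cycle, exploiting the minimality of $(G,C_0)$ on both sides. Assume for contradiction that $G$ contains a separating triangle or separating 7-cycle $C$. Since $C_0$ bounds the outer face, it is nonseparating, so $C\neq C_0$. Set $G_1=G[int(C)\cup C]$ and $G_2=G[ext(C)\cup C]$. Both are subgraphs of $G$, hence lie in $\mathcal{G}$ (no $5$-cycles, no intersecting triangles); and because $int(C)$ and $ext(C)$ are both nonempty, $\sigma(G_i)<\sigma(G)$ for $i=1,2$. Moreover $C_0\subseteq V(G_2)$, since $C_0$ lies on the outer face and hence in $ext(C)\cup C$.

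First I would invoke minimality on $(G_2,C_0)$: the precoloring of $C_0$ superextends to a $(2,0,0)$-coloring $\varphi_2$ of $G_2$. Its restriction to $C$ is a $(2,0,0)$-coloring of $G[C]$, so by a second invocation of minimality, this time on $(G_1,C)$ with $C$ precolored by $\varphi_2|_C$, one obtains a superextension $\varphi_1$ of $G_1$. The two colorings agree on $C$ by construction, so they glue into a single map $\varphi$ on $V(G)$.

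The main work, and the step I expect to be the main obstacle, is verifying that $\varphi$ is itself a superextension of the precoloring of $C_0$ to $G$. For vertices not on $C$, both the $(2,0,0)$ bound and the disagreement with $C_0$-neighbors are immediate from the relevant $\varphi_i$; note that any $C_0$-neighbor of a vertex in $int(C)$ must lie in $C_0\cap V(C)$, where the superextendability of $\varphi_1$ from $C$ takes over. The delicate case is $v\in V(C)$, where a naive sum of the two sides' color-$1$ degree bounds would overcount because the $C$-neighbors of $v$ appear on both sides. The rescue is an asymmetry in the construction: $\varphi_1$ is a super\emph{ex}tension from $C$, so every neighbor of $v$ inside $C$ receives a color different from $\varphi(v)$, and hence the same-colored neighbors of $v$ in $G$ are exactly those already counted by $\varphi_2$ in $G_2$; the $(2,0,0)$-bound at $v$ in $G_2$ therefore transfers to $G$. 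This yields a superextension of $C_0$ in $G$, contradicting the choice of $(G,C_0)$, and so $G$ contains no separating triangle or separating $7$-cycle.
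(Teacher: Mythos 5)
Your proposal is correct and follows essentially the same route as the paper: color $G-int(C)$ first by minimality with $C_0$ precolored, then color $C\cup int(C)$ by minimality with $C$ precolored, and glue. The only difference is that you spell out the verification at vertices of $C$ (no overcounting of same-colored neighbors, thanks to the superextension property inside $C$), which the paper leaves implicit.
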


\begin{proof}
Let $C$ be a separating triangle or $7$-cycle in $G$. Then $C$ is inside of $C_0$. By the minimality of $G$, $(G- int(C), C_0)$ is superextendable, and after that, $C$ is colored. By the minimality of $G$ again,  $(C\cup int(C), C)$ is superextendable. Thus,  $(G,C_0)$ is superextendable,  a contradiction.
\end{proof}

\begin{lem}\label{lem3.3}
If $G$ has a separating $4$-cycle $C_1=v_1v_2v_3v_4v_1$, then $ext(C_1)=\{b,c\}$ such that $v_1bcv_1$ is a $3$-cycle. Furthermore, the $4$-cycle is a unique separating $4$-cycle.
\end{lem}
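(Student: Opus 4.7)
The plan is to argue by contradiction against the minimality of $\sigma(G)=|V(G)|+|E(G)|$. Suppose $C_1=v_1v_2v_3v_4v_1$ is a separating $4$-cycle, and set $G_1=G[V(C_1)\cup int(C_1)]$ and $G_2=G[V(C_1)\cup ext(C_1)]$. Both are proper subgraphs of $G$ lying in $\mathcal{G}$, and $V(C_0)\subseteq V(G_2)$ since $C_0$ bounds the outer face of $G$. By the minimality of $(G,C_0)$ applied to $(G_2,C_0)$, the precoloring of $C_0$ extends to a $(2,0,0)$-coloring of $G_2$, which in particular colors $V(C_1)$ by some $\phi$. To contradict that $(G,C_0)$ is a counterexample it suffices to extend $\phi$ to $G_1$; the difficulty is that $C_1$ is a $4$-cycle, so Theorem \ref{main} does not apply directly to $(G_1,C_1)$.

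To bypass this, I would modify $G_1$ by identifying the opposite vertices $v_1,v_3$ into a single vertex $w$, forming $G_1^\ast = G_1[\{v_1,v_3\}]$ (note $v_1v_3\notin E(G)$, since a chord would give two triangles sharing the edge $v_1v_3$, violating $d^{\bigtriangledown}\ge 1$). The crucial check is that $G_1^\ast\in\mathcal{G}$: a new $5$-cycle through $w$ in $G_1^\ast$ would correspond to a $v_1$--$v_3$ walk in $G_1$, which combined with either two-edge path $v_1v_2v_3$ or $v_1v_4v_3$ on $C_1$ would already produce a $5$-cycle in $G$ itself, impossible. The no-intersecting-triangles verification can fail if both $v_1$ and $v_3$ lie on triangles in $G_1$; in that case I would argue separately, using Proposition \ref{prop3.1}(b) and the no-$5$-cycle hypothesis to show that such a configuration already forces the claimed structure. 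Once $G_1^\ast\in\mathcal{G}$ is established, I apply minimality to $(G_1^\ast,T)$ for a triangle or $7$-cycle $T$ through $w$: the extension yields a coloring of $G_1$ in which $v_1$ and $v_3$ share a color, and by arranging $\phi(v_1)=\phi(v_3)$ via a suitable choice in the extension from $C_0$ to $G_2$, the two colorings merge on $G$, contradicting minimality.

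The constraints under which this strategy fails force the structure of $ext(C_1)$. A case analysis on $|ext(C_1)|$, using that each vertex in $ext(C_1)$ has at most two neighbors on $C_1$ and these must be opposite (else a $5$-cycle arises through the remaining edges of $C_1$), together with Proposition \ref{prop3.1}(a) forcing degree at least $3$ for vertices off $C_0$, rules out all configurations except $ext(C_1)=\{b,c\}$ with $v_1bcv_1$ a $3$-cycle. For uniqueness, any second separating $4$-cycle $C_1'$ would analogously force an outer triangle containing $V(C_0)$; since $V(C_0)\subseteq\{b,c\}\cup V(C_1)$ is already pinned down by the established structure, $C_1'$ must coincide with $C_1$. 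The main obstacle I anticipate is the intersecting-triangles verification at $w$ together with the requirement that $\phi(v_1)=\phi(v_3)$ can always be arranged in the extension to $G_2$; it is precisely the failure of one of these that drives the rigid structural conclusion of the lemma.
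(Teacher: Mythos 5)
Your proposal does not work, and the central failure is in the mechanism for coloring $int(C_1)$. After identifying $v_1$ and $v_3$ into $w$, you invoke minimality for $(G_1^\ast,T)$ where $T$ is ``a triangle or $7$-cycle through $w$''. But superextendability only extends a coloring that is \emph{prescribed on $T$}; it gives you no control whatsoever over the colors received by $v_2$, $v_4$ (or even $w$, if $w\notin T$), and no suitable $T$ need exist at all --- the collapsed $C_1$ contributes only the edges $wv_2$ and $wv_4$ (indeed parallel edges, which already takes you out of the class of simple graphs), not a triangle. Hence the coloring of $G_1^\ast$ you obtain need not agree with the coloring $\phi$ of $C_1$ inherited from $G_2$, and the two colorings cannot be merged. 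The auxiliary claims have gaps as well: minimality gives you \emph{some} extension of the precoloring of $C_0$ to $G_2$, with no freedom to ``arrange $\phi(v_1)=\phi(v_3)$''; your $5$-cycle check is arithmetically wrong (a $5$-cycle of $G_1^\ast$ through $w$ lifts to a $v_1$--$v_3$ path of length $5$, which together with $v_1v_2v_3$ closes into a $7$-cycle, not a $5$-cycle, and that $7$-cycle need not be separating, so Lemma~\ref{lem3.2} does not immediately apply); and the case where both $v_1$ and $v_3$ lie on triangles is simply deferred. Finally, the structural conclusion $ext(C_1)=\{b,c\}$ with $v_1bcv_1$ a triangle does not follow from your local analysis of neighbors of exterior vertices; nothing in your argument pins down $|ext(C_1)|$.

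For comparison, the paper's proof goes the other way around and never identifies vertices: it first colors $G_1=G-int(C_1)$ (which contains $C_0$) by minimality, so $C_1$ is colored; then it replaces the edge $v_1v_2$ of $C_1$ by a path $v_1w_1w_2w_3v_2$ inside $G-ext(C_1)$, turning $C_1$ into a $7$-cycle $C_2$, and applies minimality to $(G_2,C_2)$ to color $int(C_1)$ compatibly. The only thing to verify is $\sigma(G_2)=\sigma(G-ext(C_1))+6<\sigma(G)$, i.e.\ that the exterior of $C_1$ accounts for at least $7$ units of $\sigma$; a short case analysis on $|C_0|$ and $|C_0\cap C_1|$ shows this fails only when $|C_0|=3$, $|C_0\cap C_1|=1$ and $|ext(C_1)|=2$, which is precisely the exceptional configuration in the statement, and uniqueness follows since a second separating $4$-cycle would sit inside $G-ext(C_1)$ and have more than two exterior vertices. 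This counting step is what your approach is missing and is the actual source of the lemma's exceptional structure.
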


\begin{proof}
Suppose that the lemma is not true.  Let $G_1=G- int(C_1)$ and $G_2$ be the graph obtained from $G- ext(C_1)$ by substituting $v_1w_1w_2w_3v_2$ for $v_1v_2$.  Let $C_2=v_1w_1w_2w_3v_2v_3v_4v_1$.

Since $\sigma(G_1)<\sigma(G)$, $(G_1, C_0)$ is superextendable by the minimality of $G$. This means that $C_1$ can be colored and hence $C_2$ can be colored.  If $(G_2, C_2)$ is superextendable, then $(G,C_0)$ is superextendable, a contradiction.   Since $G\in\mathcal{G}$, no edge of $C_1$ is in any triangles. Therefore, $G_2\in\mathcal{G}$. We now show that $(G_2, C_2)$ is superextendable. For this goal, we need only to check that $\sigma(G_2)<\sigma(G)$. Note that $\sigma(G_2)=\sigma(G- ext(C_1))+6$.

If $|C_0|=7$, then $\sigma(C_0)-\sigma(C_0\cap C_1)\ge7$ as $C_1\ne C_0$, and thus $\sigma(G_2)=\sigma(G- ext(C_1))+6\le[\sigma(G)-(\sigma(C_0)-\sigma(C_0\cap C_1))]+6<\sigma(G)$. Thus, we may assume that $|C_0|=3$.

If $C_1\cap C_0=\emptyset$, then $G- int(C_1)-(E(C_0)\cup E(C_1))$ contains at least one edge as $G$ is connected, thus $\sigma(G_2)=\sigma(G- ext(C_1))+6\le(\sigma(G)-\sigma(C_0)-1)+6<\sigma(G)$. So we may further assume that $C_0\cap C_1\ne\emptyset$.

Since $G\in\mathcal{G}$, $|C_0\cap C_1|=1$. If $|ext(C_1)|\ge 3$, then $\sigma(G_2)=\sigma(G- ext(C_1))+6\le[\sigma(G)-((\sigma(C_0)-1)+2)]+6<\sigma(G)$. Therefore, $|ext(C_1)|=2$ and we obtain the desired structure in the lemma, a contradiction.

If $G$ contains another separating $4$-cycle, say $C'$, then $C'$ is a subgraph of $G- ext(C_1)$, but then $ext(C')$ contains more than two vertices, a contradiction.  So, $C_1$ is the unique separating $4$-cycle.
\end{proof}

\begin{lem}\label{lem3.4}
 If $x,y\in C_0$ with $xy\not\in E(C_0)$, then $xy\not\in E(G)$ and $N(x)\cap N(y)\subseteq C_0$.
\end{lem}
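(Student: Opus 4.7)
The plan is to handle both claims by contradiction, using minimality. Since on a triangle all pairs of vertices are adjacent, the hypothesis $xy \notin E(C_0)$ forces $|C_0|=7$.

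For the first claim, suppose $xy \in E(G)$. Then $C_0$ together with the chord $xy$ decomposes into two cycles whose lengths sum to $9$; the no-$5$-cycle hypothesis rules out the $(4,5)$ split, so we get a triangle $C_1=xzy$ (with $z$ the common $C_0$-neighbor of $x, y$ on the short arc) and a $6$-cycle $C_2$. By Lemma~\ref{lem3.2}, $C_1$ is non-separating and bounds a face; since the chord $xy$ separates $z$ from the $C_2$-region, no other edge at $z$ can be drawn, so $d(z)=2$. I would now invoke minimality by forming $G' := (G - z - xy) \cup \{z'', xz'', z''y\}$, where $z''$ is a new $2$-valent vertex; then $C_0' := (C_0 - z) \cup \{xz'', z''y\}$ is a $7$-cycle in $G'$ and $\sigma(G') = \sigma(G) - 1$. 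To see $G' \in \mathcal{G}$, the key point is excluding a $4$-cycle through $xy$ in $G$ (otherwise $G'$ would contain a $5$-cycle through $z''$): such a $4$-cycle cannot be facial by Proposition~\ref{prop3.1}(c) (it would share $xy$ with $C_1$), and cannot be separating either, since by Lemma~\ref{lem3.3} its exterior would have size $2$, whereas in our setup the exterior contains $\{z, u_1, u_2, u_3, u_4\}$. Given any valid $(2,0,0)$-coloring $\varphi_0$ of $G[V(C_0)]$, extend $\varphi_0|_{V(C_0)-z}$ to a coloring $\varphi_0'$ of $C_0'$ by a compatible choice of color for $z''$, apply minimality to get a superextension $\varphi'$ of $\varphi_0'$ on $G'$, and lift by setting $\varphi := \varphi'$ on $V(G') \setminus \{z''\}$ and $\varphi(z) := \varphi_0(z)$.

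For the second claim, suppose some $v \in N(x) \cap N(y)$ lies off $C_0$; then $v$ is inside $C_0$. Combining the path $xvy$ with the two arcs of $C_0$ from $x$ to $y$ yields cycles of lengths $k+2$ and $9-k$, where $k$ is the $C_0$-distance between $x$ and $y$. The case $k=1$ contradicts the hypothesis, $k=3$ yields a forbidden $5$-cycle, so $k=2$, giving a $4$-cycle $C_4 = xvyw$ and a $7$-cycle $C_7 = xvyu_1u_2u_3u_4x$. Lemma~\ref{lem3.3} applied to $C_4$ (whose exterior contains $\{u_1,u_2,u_3,u_4\}$) forces $C_4$ to bound a face, and Lemma~\ref{lem3.2} applied to $C_7$ (whose exterior contains $w$) forces $C_7$ to bound a face. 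Hence $V(G) = V(C_0) \cup \{v\}$. A case analysis of a possible third $C_0$-neighbor of $v$ yields a contradiction in every case: a neighbor $w$ produces the intersecting triangles $vwx$ and $vwy$; a neighbor $u_1$ or $u_4$ produces a $3$-face sharing an edge with the $4$-face $C_4$, violating Proposition~\ref{prop3.1}(c); and a neighbor $u_2$ or $u_3$ produces a $5$-face, forbidden in $\mathcal{G}$. Therefore $d(v)=2$, contradicting Proposition~\ref{prop3.1}(a) since $v \notin C_0$.

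The step I expect to be the main obstacle is the deficiency bookkeeping in the lift from $\varphi'$ to $\varphi$ in Part~1: moving from $G'$ back to $G$ removes $z''$ from the neighborhood of $x$ but reintroduces both $z$ and $y$, so a priori $x$ could acquire extra color-$\varphi(x)$ neighbors beyond $s_{\varphi(x)}$. The rescue is the precise strength of \emph{superextendability} (as opposed to mere extendability) of $(G', C_0')$: it forces every interior $G'$-neighbor of $x$ to avoid color $\varphi(x)$, so interior neighbors contribute zero to $x$'s color class in $\varphi$, and the total count is controlled by $x$'s neighbors in $V(C_0)$, which respect the bound $s_{\varphi(x)}$ precisely because $\varphi_0$ is a valid $(2,0,0)$-coloring of the induced subgraph $G[V(C_0)]$ (including the chord $xy$) rather than merely of the cycle $C_0$ itself.
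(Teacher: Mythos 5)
Your overall route is the same as the paper's: for the chord you observe $d(z)=2$, delete $z$, replace the chord $xy$ by a path through a new vertex, and invoke minimality; for a common neighbor $v$ off $C_0$ you use Lemmas~\ref{lem3.2} and~\ref{lem3.3} together with Proposition~\ref{prop3.1}(a) to force $d(v)=2$. The one genuine gap is in your verification that $G'\in\mathcal{G}$, i.e.\ that no $4$-cycle of $G$ passes through the edge $xy$. You treat ``facial or separating'' as exhaustive, but it is not: a $4$-cycle $Q=xyab$ can be non-separating and still fail to bound a face, namely when a diagonal of $Q$ (that is, $xa$ or $yb$) is present and drawn inside it. As written, the conclusion ``no $4$-cycle through $xy$'' does not follow. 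The missing case is easy to dispose of: a diagonal $xa$ (or $yb$) creates the triangle $xya$ (resp.\ $xyb$), which shares the edge $xy$ with the triangle $xzy$, contradicting $d^{\bigtriangledown}\ge 1$. The paper avoids the case split altogether: any $4$-cycle $xyab$ through the chord yields the $5$-cycle $xzyab$ (note $a,b\ne z$ because $d(z)=2$), which is forbidden outright. A related small imprecision: if $a$ or $b$ lies on $C_0$, the exterior of $Q$ need not contain all of $\{z,u_1,\dots,u_4\}$, but it always contains at least three of them, which still rules out the separating case via Lemma~\ref{lem3.3}.

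In the second part there is a mild circularity rather than a gap: Lemma~\ref{lem3.2} only makes $C_7$ non-separating, and whether it bounds a face depends on exactly the chords $vu_i$ you exclude afterwards. Fortunately the only consequence you need from $C_7$ is $V(G)=V(C_0)\cup\{v\}$, and for that emptiness of $int(C_7)$ (non-separating) suffices; likewise $C_4$ is facial once $xy\notin E(G)$ (Part~1) and $vw\notin E(G)$ (your $w$-case, via intersecting triangles) are in hand, so simply record those two facts before invoking Proposition~\ref{prop3.1}(c) in the $u_1/u_4$ case. Also ``$5$-face'' should read ``$5$-cycle'' (e.g.\ $vu_2$ gives the $5$-cycle $xvu_2u_3u_4x$). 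With these repairs your argument is correct and parallels the paper's, which instead shows $N(v)\cap C_0=\{x,y\}$ directly by exhibiting $5$-cycles or intersecting triangles and then reads $d(v)=2$ off the two facial cycles. Your closing discussion of the deficiency bookkeeping in the lift (interior neighbors of $x$ avoid $\varphi(x)$ by superextendability, and the $C_0$-neighbors are controlled because $\varphi_0$ is valid on $G[V(C_0)]$ including the chord) is sound, and is a point the paper passes over in silence.
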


\begin{proof}
We may assume that $|C_0|=7$ as it is trivially true for $|C_0|=3$. Let $x, y$ be two vertices on cycle $C_0$ such that $xy\not\in E(C_0)$. Let $P$ be the shorter path on $C_0$ joining $x$ and $y$. Then $k=|E(P)|\in \{2,3\}$.

First we assume that $xy\in E(G)$. Since $G$ contains no $5$-cycle, $k=2$. Assume that $P=xvy$. Then, $xvy$ is a 3-face,  for otherwise it is a separating $3$-cycle, contradicting Lemma~\ref{lem3.2}, and $xy$ is not on any $4$-cycle. Let $H$ be the graph obtained from $G-\{v\}$ by inserting a vertex $v'$ into $xy$, where the broken edges and vertex $v$ are not in $H$. Then, $H\in\mathcal{G}$, $\sigma(H)=\sigma(G)-1$, and hence in $H$, $(C_0-\{v\})\cup xv'y$ is superextendable. But this means $(G, C_0)$ is superextendable, a contradiction. Therefore, $xy\not\in E(G)$.

Next, we assume that $u\in (N(x)\cap N(y))- C_0$. Again, since $G$ has no $5$-cycle, $k=2$ and  let $P=xvy$. Since $G\in\mathcal{G}$, $N(u)\cap C_0=\{x,y\}$. By Lemmas~\ref{lem3.2} and ~\ref{lem3.3}, both $xvyux$ and $C_0-v+u$ are facial cycles. Thus, $d(u)=2$, a contradiction to Proposition~\ref{prop3.1}(a).
\end{proof}

For convenience, let $F_k=\{f: \text{ $f$ is a $k$-face and } b(f)\cap C_0=\emptyset\}$,  $F_k'=\{f:  \text{ $f$ is a $k$-face and } |b(f)\cap C_0|=1\}$, and $F_k''=\{f:  \text{ $f$ is a $k$-face and }  |b(f)\cap C_0|=2\}$.

\begin{lem}\label{lem3.5}
Suppose that $f=v_1v_2v_3v_4$ is a $4$-face and $v_1\in C_0$. Then, $v_3\not\in C_0$. Moreover, $|N(v_3)\cap C_0|=1$ if $f\in F_4''$, and $|N(v_3)\cap C_0|=0$ if $f\in F_4'$.
\end{lem}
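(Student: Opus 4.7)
The plan is to argue both claims by contradiction, extracting short forbidden cycles -- either a $5$-cycle, or two triangles sharing an edge (violating $d^{\bigtriangledown}\ge 1$) -- using Lemmas~\ref{lem3.2}--\ref{lem3.4} as the main tools.

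For $v_3\notin C_0$: suppose $v_3\in C_0$. I would first show $v_1v_3\in E(G)$. This is automatic if $|C_0|=3$. If $|C_0|=7$ and $v_1v_3\notin E(C_0)$, Lemma~\ref{lem3.4} applied to $\{v_1,v_3\}$ puts the common neighbours $v_2,v_4$ on $C_0$, and then a second application of Lemma~\ref{lem3.4} to each face edge forces every face edge into $E(C_0)$, so $\{v_1,v_2,v_3,v_4\}$ spans a $4$-cycle subgraph of the $7$-cycle $C_0$ -- impossible. Once $v_1v_3\in E(G)$, the triangles $v_1v_2v_3$ and $v_1v_3v_4$ share the edge $v_1v_3$, contradicting $d^{\bigtriangledown}\ge 1$.

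For the neighbour bound, suppose $v_3$ has an extra $C_0$-neighbour $u$ beyond those forced by $f$. The same adjacent-triangle argument rules out $u=v_1$ (and analogously $u\neq v_2,v_4$ for $F_4''$). We have two internally disjoint length-$3$ paths $v_1v_2v_3u$ and $v_1v_4v_3u$, and closing one with a short $C_0$-arc should produce a $5$-cycle or a $4$-cycle with bad structure. If $|C_0|=3$, writing $C_0=v_1uw$, the cycle $v_1v_2v_3uwv_1$ is a $5$-cycle. For $|C_0|=7$ let $d$ be the $C_0$-distance between $u$ and $v_1$. If $d=2$, the length-$2$ $C_0$-arc closes into a $5$-cycle. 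If $d=1$, the $4$-cycles $v_1v_2v_3u$ and $v_1v_4v_3u$ both have empty interior by Lemma~\ref{lem3.3} (the exterior contains the five remaining $C_0$-vertices), the chord $v_2u$ is excluded because it would give two triangles sharing the edge $v_2u$, each $4$-cycle therefore bounds a face, and the cyclic order at $v_2$ collapses to degree $2$, contradicting Proposition~\ref{prop3.1}(a).

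The main obstacle is the $|C_0|=7$, $d=3$ subcase: here $uv_1\notin E(G)$ by Lemma~\ref{lem3.4}, so no immediate closure to a $5$-cycle is available. My approach is to analyse the two faces $F_2,F_3$ on the edge $v_3u$: by Proposition~\ref{prop3.1}(c) each is a $\ge 4$-face, and the no-$5$-cycle hypothesis forces each to be either a $4$-face or a $\ge 6$-face. If the fourth vertex of a $4$-face option lies on $C_0$, Lemma~\ref{lem3.4} identifies it with a $C_0$-neighbour of $u$; the neighbour on the side of $u$ away from $v_1$ yields a direct $5$-cycle through $v_1$ along the longer $C_0$-arc, while the other choice produces a rigid local structure that, by the symmetric analysis on the opposite side of $v_3u$ together with Lemma~\ref{lem3.3}, again forces a $5$-cycle or a degree-$2$ vertex. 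The $F_4''$ case runs in parallel, with the pre-existing $C_0$-neighbour of $v_3$ on $f$ playing the role of $v_2$ or $v_4$ above.
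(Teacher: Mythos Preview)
Your argument for $v_3\notin C_0$ is fine and matches the paper's. The genuine gap is the $d=3$ subcase of $F_4'$ with $|C_0|=7$. Your proposed analysis of the two faces incident with $v_3u$ is incomplete --- you never say what happens when those faces are $6^+$-faces, or when a neighbouring $4$-face has its fourth vertex off $C_0$ --- and there is no reason to expect that line to close. The paper's idea, which you are missing, is to use Lemma~\ref{lem3.2} rather than Lemma~\ref{lem3.3}: writing $C_0=v_1u_1u_2\cdots u_6v_1$ with $u=u_4$, the cycle $v_1v_4v_3u_4u_3u_2u_1v_1$ has length $7$ and separates $v_2$ from $\{u_5,u_6\}$, contradicting Lemma~\ref{lem3.2}. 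That single observation disposes of the entire $d=3$ case.

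Two further simplifications you should adopt. For $d=1$, your claim that both $4$-cycles $v_1v_2v_3u$ and $v_1v_4v_3u$ have empty interior is false: since $f=v_1v_2v_3v_4$ is a face and $u$ lies outside it, one of these $4$-cycles encloses $f$ and hence contains $v_4$ (or $v_2$) in its interior. But that is exactly the contradiction with Lemma~\ref{lem3.3} (five $C_0$-vertices outside), so the argument ends there --- your subsequent degree-$2$ analysis is unnecessary. For $F_4''$, the paper's route is much shorter than running the $F_4'$ analysis ``in parallel'': with $v_1,v_2\in C_0$ and an extra neighbour $x\in N(v_3)\cap C_0\setminus\{v_2\}$, Lemma~\ref{lem3.4} applied to $v_2,x$ (common neighbour $v_3\notin C_0$) forces $v_2x\in E(C_0)$, and then $v_1v_2xv_3v_4v_1$ is a $5$-cycle.
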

\begin{proof}
 Suppose on the contrary that  $v_3\in C_0$. By Lemma~\ref{lem3.4}, $v_2$ and $v_4$ are both in $C_0$. This implies that $|C_0|=7$ and $C_0$ has a chord, contrary to Lemma~\ref{lem3.4}.

Suppose first that $f\in F_4''$ and $b(f)\cap C_0=\{v_1, v_2\}$. By Lemma~\ref{lem3.4}, $v_3, v_4\not\in C_0$.  If $x\in (N(v_3)\cap C_0-v_2)$, then by Lemma~\ref{lem3.4}, $v_2x\in E(C_0)$, but then $v_1v_2xv_3v_4v_1$ is a $5$-cycle, a contradiction.  Therefore, $N(v_3)\cap C_0=\{v_2\}$.

Next,  suppose  otherwise that $f\in F_4'$, and $v_3$ has a neighbor, say $x$, in $C_0$. As $G$ has no $5$-cycle,  $|C_0|=7$. Let $C_0=v_1u_1u_2...u_6v_1$. We may assume that $x\in\{u_4,u_5,u_6\}$ by symmetry. If $x=u_4$, then $v_1v_4v_3u_4u_3u_2u_1v_1$ is a separating $7$-cycle. If $x=u_5$, then $v_1v_2v_3u_5u_6v_1$ is a $5$-cycle, a contradiction. If $x=u_6$, then $v_1v_2v_3u_6v_1$ is a separating $4$-cycle such that $\{u_1, u_2, u_3, u_4, u_5\}$ is outside of this separating 4-cycle, contrary to Lemma~\ref{lem3.3}.  Therefore, $|N(v_3)\cap C_0|=0$.
\end{proof}

\begin{lem}\label{lem3.6}
Let $u, w$ be non-consecutive vertices on a $4$-face. If at most one of $u$ and $w$ is incident to a triangle, then $G[\{u,w\}]\in \mathcal{G}$.
\end{lem}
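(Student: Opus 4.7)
The plan is to check that $G' := G[\{u,w\}]$ lies in $\mathcal{G}$ by verifying it has no $5$-cycle and satisfies $d^\bigtriangledown \ge 1$. Write $f = uxwy$ for the $4$-face with $u, w$ opposite; since $u \not\sim w$ no loop is created by the identification, and collapsing the multi-edges through the common neighbors yields a simple plane graph.

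\textbf{No $5$-cycle.} A $5$-cycle of $G'$ must pass through the identified vertex $uw$ (as $G$ has no $5$-cycle) and thus arises from a $u$-$w$ path $P = u p_1 p_2 p_3 p_4 w$ of length $5$ internally disjoint from $\{u,w\}$ in $G$. I would case-split on $V(P) \cap \{x, y\}$. If $p_1$ or $p_4$ lies in $\{x, y\}$, concatenation with the opposite face-edge of $f$ produces a $5$-cycle in $G$, a contradiction. If $p_2$ or $p_3$ lies in $\{x, y\}$, the induced triangle shares an edge with $f$, impossible by Proposition~\ref{prop3.1}(c) together with Lemma~\ref{lem3.2}. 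In the remaining case $V(P) \cap \{x, y\} = \emptyset$, the three paths $uxw$, $uyw$, $P$ form a planar theta-graph with $f$ as the bounded face between $uxw$ and $uyw$, so (after swapping $x$ and $y$ if needed) $x \in int(C_7')$, where $C_7' := u p_1 p_2 p_3 p_4 w y u$. Lemma~\ref{lem3.2} forces $C_7'$ non-separating, hence $ext(C_7') = \emptyset$ and $V(C_0) \subseteq V(C_7')$. If $|C_0| = 7$, Lemma~\ref{lem3.4} (no chord of $C_0$) and vertex counting yield $C_0 = C_7'$, so $u, w \in V(C_0)$, contradicting Lemma~\ref{lem3.5} applied to $f$ at $v_1 = u$. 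If $|C_0| = 3$, Lemma~\ref{lem3.5} rules out both $u, w$ on $C_0$, and a case-check over the remaining vertex-triples in $V(C_7')$ produces in each case either a $5$-cycle in $G$ (arising from the chord that completes $V(C_0)$ into a triangle) or a violation of Proposition~\ref{prop3.1}(a) (some vertex of $f$ is forced to have a neighbor in $ext(C_7') = \emptyset$).

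\textbf{$d^\bigtriangledown \ge 1$.} Triangles of $G'$ not through $uw$ are triangles of $G$ avoiding $\{u,w\}$, and are pairwise vertex-disjoint by $G \in \mathcal{G}$. A triangle of $G'$ through $uw$ arises either from a triangle through exactly one of $u, w$ in $G$, or from a $u$-$w$ path of length $3$ in $G$. The same reasoning as above shows any length-$3$ $u$-$w$ path must meet $\{x, y\}$ (else, together with $uxw$, it forms a $5$-cycle in $G$), and then forces a triangle sharing an edge with $f$, impossible. Hence no length-$3$ $u$-$w$ path exists, and by the hypothesis that at most one of $u, w$ lies on a triangle in $G$, $G'$ has at most one triangle through $uw$; this triangle cannot share a non-$uw$ vertex with any other triangle of $G'$, as that would yield two triangles of $G$ sharing a vertex, contradicting $G \in \mathcal{G}$.

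\textbf{Main obstacle.} The most delicate step is the last subcase of the $5$-cycle analysis where $P$ avoids $\{x, y\}$: no direct short-cycle contradiction appears, and one must exploit the planar theta-structure, the non-separability of $7$-cycles (Lemma~\ref{lem3.2}), and careful use of Lemmas~\ref{lem3.4} and~\ref{lem3.5}, together with the minimum-degree constraint of Proposition~\ref{prop3.1}(a), in order to handle the $|C_0| = 3$ sub-subcase where Lemma~\ref{lem3.4} is vacuous.
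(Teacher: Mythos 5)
Your skeleton matches the paper's proof of Lemma~\ref{lem3.6}: identification along the $4$-face, a new triangle would require a $u$--$w$ path of length $3$ (ruled out), the hypothesis prevents intersecting triangles, and a $5$-cycle of $G[\{u,w\}]$ yields a $u$--$w$ path $P$ of length $5$, killed either by a direct $5$-cycle when $P$ meets the other two face vertices or by a $7$-cycle otherwise. Up to and including the theta-graph observation that one of $x,y$ lies inside the corresponding $7$-cycle $C_7'$, your argument is sound and is essentially the paper's; at that point the paper simply concludes that $C_7'$ is a \emph{separating} $7$-cycle, contradicting Lemma~\ref{lem3.2}, which is immediate whenever some vertex of $C_0$ lies off $C_7'$ (vertices of $C_0$ bound the outer face, so they cannot lie strictly inside $C_7'$). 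Your variant---``non-separating, hence $ext(C_7')=\emptyset$, hence $V(C_0)\subseteq V(C_7')$''---is a legitimate way to make that clause precise, and your $|C_0|=7$ branch is complete.

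The gap is your $|C_0|=3$ branch, which is precisely the only nontrivial part of this step and is left as an unexecuted ``case-check'' with reasons that do not hold in the central cases. First, you never show \emph{which} triples of $V(C_7')$ can carry $C_0$: one needs an arc-length argument (an edge of $C_0$ that is a chord of $C_7'$ cuts off arcs of lengths $p$ and $7-p$; the no-$5$-cycle and no-intersecting-triangle conditions force $p\in\{1,5\}$, so the three vertices of $C_0$ must be \emph{consecutive} on $C_7'$, two edges of $C_0$ lying on $C_7'$ and one being an exterior chord). Second, for the resulting triples such as $V(C_0)=\{p_1,p_2,p_3\}$, $\{u,p_1,p_2\}$ or $\{p_2,p_3,p_4\}$, neither of your advertised contradictions occurs: the chord produces only a $3$-cycle (namely $C_0$ itself) and a $6$-cycle, not a $5$-cycle, and the fourth face vertex $y$ is \emph{not} forced to have a neighbor in the empty exterior---since the face $f$ occupies the whole interior angle at $y$, any extra edge at $y$ must be a chord drawn in the exterior region to another vertex of $C_7'$, and such chords are not excluded by your stated dichotomy. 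Closing these cases requires checking chord by chord that each candidate edge $yp_i$ creates a $5$-cycle or two triangles sharing an edge, so that $d(y)=2$ with $y\notin C_0$, contradicting Proposition~\ref{prop3.1}(a) (only the triples containing $w$ or $y$ fall to a direct $5$-cycle through the chord, e.g.\ $V(C_0)=\{p_4,w,y\}$). As written, the final case of your proof is therefore incomplete, and the parenthetical justification you give for it would fail exactly where the work is needed.
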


\begin{proof}
Suppose that $f=uvwx$. By Lemma~\ref{lem3.5}, we may assume that $w,x\not\in C_0$.

Since $G\in\mathcal{G}$, $G$ has no $3$-path joining $u$ and $w$, thus no new triangle can be obtained from the identification of $u$ and $w$. Since at most one vertex in $\{u,w\}$ is incident to a triangle, the identification of $u$ and $w$ produces no intersecting triangles. If $G[\{u,w\}]$ has a $5$-cycle, then $G$ has a $5$-path $P'$ joining $u$ and $w$. If one of $v$ and $x$ is in $P'$, then $b(f)\cup P'$ has a $5$-cycle, a contradiction. So, $v,x\not\in V(P')$, and hence either $P'\cup uvw$ or $P'\cup uxw$ is a separating $7$-cycle; both contradict Lemma \ref{lem3.2}. Therefore, $G[\{u,w\}]\in \mathcal{G}$.
\end{proof}

For convenience, let $f=v_1v_2...v_k$ have corresponding degrees $(d_1,d_2,...d_k)$.

\begin{lem}\label{lem3.7}
Let $f=uvwx$ be a $4$-face in $F_4\cup F_4'$. Then (1) if $b(f)\cap C_0=\{u\}$,  then each of $u$ and $w$ is incident to  a triangle. (2)  if $f\in F_4$ is a $(4^-,3^+,4^-,3^+)$-face, then each of $v$ and $x$ is incident to a triangle.  In particular, there is no $(4^-,3,4^-,3^+)$-face in $F_4$.
\end{lem}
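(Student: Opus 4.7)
Both parts will be proved by contradiction using vertex identification on the diagonal pair of the $4$-face $f=uvwx$ (via Lemma~\ref{lem3.6}) and the minimality of $(G,C_0)$. In each case the identified graph $G'$ is in $\mathcal{G}$ with $\sigma(G')<\sigma(G)$, so by minimality the precoloring of (the image of) $C_0$ superextends to $G'$; I then copy the identified color back to the two original vertices of $G$ and verify that the resulting coloring is a valid $(2,0,0)$-coloring, contradicting the choice of $(G,C_0)$.

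For (1), I assume some $z\in\{u,w\}$ is not on a triangle. If $z=u$ and $|C_0|=3$, then $u$ already lies on the triangle $C_0$, so I may assume $|C_0|=7$ in that subcase. By Lemma~\ref{lem3.5}, $N(w)\cap C_0=\emptyset$, so identifying $u$ with $w$ leaves $C_0$ structurally unchanged with $u$ relabelled $u(w)\in C_0'$. The key point is that $u(w)\in C_0'$: superextendability forces every non-$C_0'$ neighbor of $u(w)$, in particular $v$, $x$, and any other common neighbor of $u$ and $w$, to receive a color different from that of $u=w$. Hence no defect-$0$ constraint is tightened and no defect-$2$ count is exceeded, which immediately contradicts the choice of $(G,C_0)$.

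For (2), I assume $v$ is not on a triangle (the case of $x$ being symmetric). Identifying $v$ and $x$ yields $G'\in\mathcal{G}$ by Lemma~\ref{lem3.6}, and since $v,x\notin C_0$ the superextension exists; I color both $v$ and $x$ with the color $c$ of $v(x)$. When $c\in\{2,3\}$, superextendability already forbids any neighbor of $v(x)$ from having color $c$, so no defect issue arises. The delicate case is $c=1$, which is the main obstacle I anticipate: first, any putative common neighbor $y\in N(v)\cap N(x)\setminus\{u,w\}$ cannot be adjacent to $u$ or $w$ (else $v$ lies on the triangle $uyv$ or $wyv$, contradicting the assumption), and a careful analysis using the no-$5$-cycle condition together with Lemma~\ref{lem3.3} applied to the $4$-cycle $uvyxu$ rules out such $y$; second, for the remaining common neighbors $u$ and $w$ themselves, the degree bound $d(u),d(w)\le 4$ combined with the bound on color-$1$ neighbors in $G'$ (at most two, including $v(x)$) forces the extended coloring in $G$ to remain valid.

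Finally, the ``in particular'' statement follows by specializing to $d(v)=3$. Let $v'$ denote $v$'s third neighbor, outside $f$. Since $uw\notin E(G)$ (otherwise $uvw$ and $uxw$ would be triangles sharing $u$, contradicting $G\in\mathcal{G}$), the triangle on $v$ given by the first part of (2) must be either $uvv'$ or $wvv'$. In the former case the cycle $u\,v'\,v\,w\,x\,u$ is a $5$-cycle (with edges $uv'$, $v'v$, $vw$, $wx$, $xu$); in the latter case, $w\,v'\,v\,u\,x\,w$ is a $5$-cycle. Either contradicts $G\in\mathcal{G}$, so no $(4^-,3,4^-,3^+)$-face in $F_4$ exists.
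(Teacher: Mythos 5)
Your part (1) and your derivation of the ``in particular'' statement are correct and follow essentially the paper's route: identify the diagonal pair via Lemma~\ref{lem3.6}, use Lemma~\ref{lem3.5} to keep $C_0$ untouched, superextend by minimality, and copy the color back; your $5$-cycle/intersecting-triangle argument for the case $d(v)=3$ even supplies a detail the paper leaves implicit.

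The gap is in part (2), at exactly the step you call delicate. When the color $c$ of $v(x)$ is $1$, your claim that the degree bound $d(u),d(w)\le 4$ together with ``at most two color-$1$ neighbors in $G'$, including $v(x)$'' forces the copied-back coloring to \emph{remain valid} is false. If $u$ is colored $1$ in $G'$ and has one further color-$1$ neighbor besides $v(x)$ (which deficiency $2$ of color $1$ permits), then after you give both $v$ and $x$ color $1$ in $G$, the vertex $u$ has three neighbors colored $1$ (namely $v$, $x$, and that further neighbor), violating the $(2,0,0)$ condition; the same can occur at $w$. The bound $d(u),d(w)\le 4$ does not exclude this configuration --- it is what makes it \emph{repairable}: since $v$ and $x$ now share color $1$, the neighbors of $u$ (resp.\ $w$) use at most two colors, so $u$ (resp.\ $w$) can be recolored properly (and $u,w$ are nonadjacent, so both can be fixed if needed, without disturbing the superextension constraints). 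This recoloring is the heart of the paper's proof of (2) and is absent from yours; as written, your argument asserts validity where it can fail. Two smaller points: your exclusion of a common neighbor $y\notin\{u,w\}$ of $v$ and $x$ is only asserted (``a careful analysis \dots rules out such $y$''), though the paper is equally terse in citing Lemma~\ref{lem3.3}; and in the case $c\in\{2,3\}$ the reason no neighbor of $v(x)$ has color $c$ is the deficiency $0$ of colors $2$ and $3$ in the $(2,0,0)$-coloring of $G'$, not superextendability itself.
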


\begin{proof}
(1) Suppose on the contrary that at most one of $u$ and $w$ is incident to a triangle. By Lemma~\ref{lem3.6}, $G[\{u,w\}]\in \mathcal{G}$.
By Lemma~\ref{lem3.5}, $|N(w)\cap C_0|=0$. Since $\sigma(G[\{u,w\}])=\sigma(G)-3$, $(G[\{u,w\}], C_0)$ is superextendable such that the color of $u(w)$ is different from $v$ and $x$.
But then $(G, C_0)$ is superextendable, by coloring $u$ and $w$ with the color of $u(w)$ and preserving the colors of the other vertices, a contradiction.

(2) Suppose to the contrary that at most one of $v$ and $x$ is incident to a triangle. By Lemma~\ref{lem3.6}, $G[\{v,x\}]\in\mathcal{G}$. Then $(G[\{v,x\}], C_0)$ is superextendable. Color $v, x$ with the color of $v(x)$ and preserve the colors of the other vertices. We obtain a coloring of $(G,C_0)$, unless $v(x)$ is colored with $1$ and $u$ (or $w$) is colored with $1$ as well and one of the other two neighbors of $u$ (or $w$) is colored with $1$. Note that $v$ and $x$ have no common neighbors other  than $u$ and $w$ by Lemma~\ref{lem3.3}.  In this case, we recolor $u$ (or $w$)  properly and get a coloring of $(G, C_0)$.
\end{proof}

\begin{lem}\label{lem3.8}
Every $3$-vertex in $int(C_0)$ has either a neighbor on $C_0$ or a $5^+$-neighbor.
\end{lem}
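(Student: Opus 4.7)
I argue by contradiction using a delete-and-recolor strategy. Suppose $v$ is a $3$-vertex in $int(C_0)$ whose three neighbors $v_1,v_2,v_3$ all lie off $C_0$ and have degree at most $4$. My first move is to consider $G':=G-v$. Vertex deletion creates neither new cycles nor new intersecting triangles, so $G'\in\mathcal G$; since $C_0\subseteq G'$ and $\sigma(G')<\sigma(G)$, the minimality of $(G,C_0)$ gives that $(G',C_0)$ is superextendable. Fix a $(2,0,0)$-coloring $\varphi$ of $G'$ extending the precoloring on $C_0$ and satisfying the superextension condition. Since $v$ has no neighbor on $C_0$, it suffices to find a color for $v$ that respects the $(2,0,0)$ deficiency rules, and any such extension contradicts the choice of $(G,C_0)$.

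Next I split on the multiset $\{\varphi(v_1),\varphi(v_2),\varphi(v_3)\}$. If some color in $\{2,3\}$ is missing, assign that color to $v$. If both $2$ and $3$ occur but $1$ does not, assign $1$ to $v$, which then has zero color-$1$ neighbors. Otherwise $\{\varphi(v_1),\varphi(v_2),\varphi(v_3)\}=\{1,2,3\}$; after relabeling, $\varphi(v_i)=i$. Assigning $\varphi(v)=1$ works unless $v_1$ already has two color-$1$ neighbors in $G'$, filling its deficiency. This residual case is the main obstacle.

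To handle it, my plan is to recolor $v_1$ from $1$ to one of $\{2,3\}$. Because $d_{G'}(v_1)\le 3$ and two of $v_1$'s $G'$-neighbors have color $1$, $v_1$ has at most one non-color-$1$ neighbor in $G'$, and its color (if any) forbids at most one choice in $\{2,3\}$. The new color of $v_1$ must also differ from every $C_0$-neighbor of $v_1$. I argue via Lemma~\ref{lem3.4} that $v_1$ has at most one $C_0$-neighbor: two would be consecutive on $C_0$, hence adjacent, hence assigned distinct colors in $\{2,3\}$ (neither can be $1$, as $\varphi(v_1)=1$), leaving $d_{G'}(v_1)-2\le 1$ remaining $G'$-neighbors of $v_1$ and thus at most one color-$1$ neighbor -- contradicting the residual assumption. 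So $v_1$ has at most one $C_0$-neighbor $u$, and when $u$ exists a direct count shows $u$ is the unique non-color-$1$ neighbor of $v_1$ in $G'$. Hence across all subcases the combined constraints forbid at most one color in $\{2,3\}$, and a valid recolor of $v_1$ exists.

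After the recolor, properness is preserved, the color-$1$ deficiencies at $v_1$'s former color-$1$ neighbors only decrease, and the superextension condition at $v_1$ is maintained. In the updated coloring all three neighbors of $v$ carry colors from $\{2,3\}$, so $\varphi(v)=1$ extends $\varphi$ to a superextending $(2,0,0)$-coloring of $G$ with $v$ having zero color-$1$ neighbors, contradicting the choice of $(G,C_0)$ and completing the proof.
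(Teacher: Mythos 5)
Your proof is correct and follows essentially the same route as the paper: delete $v$, use minimality to superextend the coloring to $G-v$, and in the only problematic case (all three colors present and the color-$1$ neighbor already saturated) recolor that degree-at-most-$4$ neighbor with a color absent from its at most three remaining neighbors, then give $v$ color $1$. Your extra bookkeeping about $C_0$-neighbors of $v_1$ is harmless but unnecessary, since choosing a color missing from all of $v_1$'s neighbors automatically preserves the superextension condition.
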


\begin{proof}
Let $v\in int(C_0)$ be a $3$-vertex with no neighbor on $C_0$. If all neighbors of $v$ have degree at most $4$,  then $(G- v, C_0)$ is superextendable by the minimality of $G$. We may assume that all neighbors of $v$ are colored differently and $u$ be the neighbor of $v$ that is colored with $1$. Then either two neighbors of $u$ are colored with $1$, or $u$ is nicely colored. In the former case, we recolor $u$ with the color not in its neighbors and color $v$ with $1$, and in the latter case, we color $v$ with $1$, a contradiction.
\end{proof}

We could say more on the degrees of the neighbors of a $3$-vertex on a triangle. For a 3-vertex $u$, let $u'$ be the pendant neighbor of $u$ on a $3$-face $f=uvw$.

\begin{lem}\label{lem3.9}
Let $f=uvw$ be a $(3,3,5^-)$-face in $G$ with $d(u)=d(v)=3$. If $(b(f)\cup \{u'\})\cap C_0=\emptyset$, then $d(u')\ge 5$.
\end{lem}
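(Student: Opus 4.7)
The plan is to assume $d(u') \leq 4$ for contradiction and extend a given $(2,0,0)$-precoloring of $C_0$ to all of $G$, producing a superextendable coloring of $(G, C_0)$ and thereby contradicting the choice of $(G, C_0)$. I form $G' = G - \{u, v\}$. The five vertices $u, v, w, u', v'$ are pairwise distinct: $u' \neq v'$, because otherwise $uvu'$ would be a triangle sharing edge $uv$ with $uvw$, violating $d^{\bigtriangledown} \geq 1$, and $u', v' \notin \{u, v, w\}$ by the definition of pendant neighbor. Since $d(u) = d(v) = 3$, deleting $u, v$ removes exactly the five edges $uv, uw, uu', vw, vv'$, so $\sigma(G') = \sigma(G) - 7 < \sigma(G)$. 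Clearly $G' \in \mathcal{G}$, and $C_0 \subseteq G'$ as $u, v \notin C_0$. By the minimality of $(G, C_0)$, the pair $(G', C_0)$ is superextendable; fix such a coloring $\varphi$ and set $a = \varphi(w)$, $b = \varphi(u')$, $c = \varphi(v')$.

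As $u, v$ have no neighbors in $C_0$, extending $\varphi$ to $u, v$ amounts to finding a valid $(2,0,0)$-coloring of $G$. A case analysis on $(a, b, c)$ shows that the extension is immediate except in the ``bad'' case $a \neq b$ with $b = c$: the proper-coloring constraints then force both $u$ and $v$ to the unique color $d \in \{1, 2, 3\} \setminus \{a, b\}$, violating the edge $uv$. If $d = 1$, i.e., $\{a, b\} = \{2, 3\}$, I set $\varphi(u) = \varphi(v) = 1$; the defect at $u$ and $v$ is at most $1$, because neither $w$ nor $u'$ is colored $1$. Otherwise $d \in \{2, 3\}$, and I try $\varphi(u) = 1$, $\varphi(v) = d$; the sole obstruction is a defect-$2$ violation at $w$ (when $a = 1$) or at $u'$ (when $b = 1$).

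For the first obstruction ($a = 1$ with $w$ at defect $2$ in $\varphi$), $w$'s $\leq d(w) - 2 \leq 3$ neighbors in $G'$ include exactly two colored $1$, so at most one has a color in $\{2, 3\}$; I recolor $w$ to the other color in $\{2, 3\}$. For the second obstruction ($b = 1$ with $u'$ at defect $2$), the hypothesis $d(u') \leq 4$ gives $u'$ at most three neighbors in $G'$, exactly two of which are colored $1$, so the same recoloring strategy works on $u'$. Either recoloring keeps $(G', C_0)$ superextendable --- the former color-$1$ neighbors of the recolored vertex only see their defect decrease, the new color avoids every neighbor (including any in $C_0$), and no other vertex is affected --- and a second pass through the case analysis with the updated coloring produces a valid extension to $u, v$. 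This completes a superextendable $(2, 0, 0)$-coloring of $(G, C_0)$, yielding the contradiction. The main obstacle is the defect bookkeeping across the bad subcases, but the two recoloring arguments are structurally symmetric, and $d(u') \leq 4$ is used exactly to guarantee enough slack in the $u'$-recoloring step.
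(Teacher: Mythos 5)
Your proof is correct and follows essentially the same route as the paper: delete $u,v$, invoke the minimality of $(G,C_0)$, and extend the coloring by exploiting the deficiency-$2$ slack of color $1$, properly recoloring $w$ or $u'$ when blocked (possible precisely because $d(w)\le 5$ and $d(u')\le 4$), just as the paper does after first handling $d(w)\le 4$ via Lemma~\ref{lem3.8}. One small misstatement --- the hypothesis does not exclude $v'\in C_0$, so $v$ may in fact have a neighbor on $C_0$ --- is harmless, since in every case your extension assigns $v$ a color different from that of $v'$.
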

\begin{proof}
The result is true for all $(3,3,4^-)$-faces in $F_3$  by Lemma~\ref{lem3.8}. So we may assume $d(w)=5$ and $d(u')\le 4$.  By the minimality of $G$, $(G- \{u,v\}, C_0)$ is superextendable. Properly color $v$, and $u$ cannot be properly colored only if $u', v, w$ are colored differently.
Note that $d(u')\le 4$. If $u'$ is colored with $1$, then either $u'$ is nicely colored  or two neighbors of $u'$ are colored
with $1$. In the former case,  we color $u$ with $1$; in the latter case,  we color $u'$ with the color not in
its neighbors and color $u$ with $1$.  If $v$ is colored with $1$, then we color $u$ with $1$ as well.
So we may assume that $w$ is colored with $1$, then either $w$ is nicely colored or two of the three other neighbors of $w$ other than $u,v$ are colored with $1$. In the former case,  we color $u$ with $1$; in the latter case, we recolor $w$ properly and color $u$ and $v$ with $1$.
\end{proof}

We define some special faces from $F_3$. First of all, $(3,4,4)$-faces and $(3,3,5^-)$-faces in $F_3$ are {\em special}.
Then we use a recursive method to define special $(3,5,5)$-faces. The initial special $(3,5,5)$-faces are those $(3,5,5)$-faces whose two $5$-vertices have six pendant $(3,3,5^-)$-faces or $(3,4,4)$-faces altogether;  then a $(3,5,5)$-face is {\em special} if the two $5$-vertices have six pendant $(3,3,5^-)$-faces, or $(3,4,4)$-faces, or initial or subsequent special $(3,5,5)$-faces altogether. Clearly, special $(3, 5, 5)$-faces are well-defined.
We call a $3$-face {\em special} if it is a $(3,3,5^-)$-face, or a $(3,4,4)$-face, or a special $(3,5,5)$-face.

The following is a technical lemma which we will use many times in the proofs of later lemmas.

\begin{lem}\label{lem3.10}
Let $f=uvw$ be a special $3$-face with $d(u)=3$ and $u'\not\in C_0$. Then a desired coloring of $(G-\{u, u'\}, C_0)$ can be extended to the desired coloring of $G-u'$ such that $u$ is colored with $1$.
\end{lem}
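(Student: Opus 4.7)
The plan is to try to set $\varphi(u) = 1$ directly in the given coloring $\varphi$ of $G - \{u, u'\}$, and, if a conflict arises, to resolve it by a short chain of local recolorings that leave the precoloring of $C_0$ intact. Since $u$'s neighbors are $u', v, w$ and only $v, w$ appear in $\varphi$, the direct extension succeeds unless $v$ or $w$ (say $v$ by symmetry) is already colored $1$ and has two color-$1$ neighbors in $\varphi$. The goal then reduces to locally removing color $1$ from $v$ without breaking the desired coloring elsewhere.

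I would then try two local modifications in turn: modification (A) recolors $v$ from $1$ to a proper color $c \in \{2, 3\}$, which works when $v \notin C_0$ and $v$ has no neighbor of color $c$; modification (B) recolors one of $v$'s color-$1$ neighbors $x$ from $1$ to a proper color $c$, which works when $x \notin C_0$ and $x$'s remaining neighbors leave some color $c \in \{2,3\}$ free. After a successful modification I set $\varphi(u) = 1$. The proof then splits by the type of the special $3$-face $f$.

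For the base types, namely $(3,3,5^-)$- and $(3,4,4)$-faces, the degree bounds $d(v), d(w) \leq 5$ severely limit the number of external neighbors of $v$ and $w$, and a direct case analysis of which colors appear around $v$, combined with Proposition~\ref{prop3.1}(a) and Lemmas~\ref{lem3.2}, \ref{lem3.3}, and \ref{lem3.9}, shows that at least one of (A) or (B) always succeeds.

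The main obstacle will be the special $(3,5,5)$-face case, where $v$ and $w$ are $5$-vertices each with three pendant special $3$-faces and richer conflict patterns are possible. My plan is to induct on the recursive level of $f$ in the definition of a special $(3,5,5)$-face: when both (A) and (B) fail at $v$---essentially because $v$'s non-color-$1$ neighbors occupy both $2$ and $3$, and each color-$1$ external neighbor $v_i$ is blocked by both proper colors on its pendant $3$-face $f_i = v_i a_i b_i$---I would route the color change through a pendant $f_i$ of strictly smaller recursive level, uncoloring $v_i$ and applying Lemma~\ref{lem3.10} (or a variant of its base-case argument) inductively on $f_i$ to free up a proper color at $v_i$. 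The well-foundedness of the recursive definition of \emph{special} guarantees termination, after which setting $\varphi(u) = 1$ completes the extension.
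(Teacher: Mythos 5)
Your base cases are essentially the paper's argument, though your repair set (A)/(B) is a bit too narrow even there: for a $(3,3,5^-)$-face with the $5^-$-vertex $w$ colored $1$ and saturated, both (A) and (B) can fail (e.g.\ $v$ colored $2$, the remaining colored neighbor of $w$ colored $3$, and the two color-$1$ neighbors of $w$ not recolorable), and the paper escapes by recoloring $w$ into a clash with the low-degree face vertex $v$ and then fixing $v$ afterwards -- a move not of type (A) or (B).

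The genuine gap is in the special $(3,5,5)$ case, exactly the step you flag as the main obstacle. When $v$ is colored $1$ and saturated and its color-$1$ neighbor $v_i$ lies on a pendant special face $f_i$ whose other two vertices carry $2$ and $3$, you propose to invoke Lemma~\ref{lem3.10} inductively on $f_i$ ``to free up a proper color at $v_i$.'' But the lemma's conclusion points in the opposite direction: it only guarantees that the $3$-vertex of $f_i$ can be given color $1$, which $v_i$ already has; it says nothing about making $2$ or $3$ available at $v_i$, and when $f_i$ is itself a special $(3,5,5)$-face with its two $5$-vertices colored $2$ and $3$ there is no reason any induction on recursive level lets you move $v_i$ off color $1$. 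Moreover, the inductive hypothesis of the lemma is not even satisfied in your setup: applying it to $f_i$ requires a coloring of $G-\{v_i,v_i'\}$ where the pendant neighbor $v_i'$ is $v$ or $w$, i.e.\ it requires $v$ (or $w$) to be uncolored, while your surgical strategy keeps them colored. The paper resolves this by turning the recursion around: uncolor \emph{both} $5$-vertices $v$ and $w$ of $f$ (so the lemma's hypothesis holds for every pendant face), use the lemma recursively to recolor all six of their remaining neighbors with color $1$, and then color $v$ with $2$, $w$ with $3$, and $u$ with $1$. That is the idea your proposal is missing: the recursion is used to flood the pendant neighbors with color $1$ so that colors $2$ and $3$ become free at the $5$-vertices themselves, not to free a color in $\{2,3\}$ at a single pendant $3$-vertex.
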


\begin{proof}
Let $f$ be a $(3,3,5^-)$-face. Note that $G-\{u, u'\}$ is $(2, 0, 0)$-colorable. If $w$ is not colored with $1$, we can color $u$ with $1$. Thus, we may assume that $w$ is colored with $1$. If $v$ is colored with $1$, then $w$ has at most one neighbor (other than $u$ and $v$) which was colored with $1$.  In this case,  we recolor $v$ properly and then color $u$ with $1$. Thus, assume further that $v$ is not colored with $1$. The vertex $u$ cannot be colored with $1$ if and only if $w$ has two neighbors (other than $u$ and $v$) which are colored with $1$. In this case, since $d(w)\leq 5$, $w$ can be nicely colored with 2 or 3 and then we recolor $v$ properly and color $u$ with $1$.

Let $f$ be a $(3,4,4)$-face. If $u$ cannot be colored with $1$,  then $w$ or $v$ is colored with $1$. If  both $w$ and $v$ are colored with $1$, then either $v$ or $w$, say $v$,  has a neighbor (other than $u$ and $w$) colored with $1$. In this case, we recolor $v$ properly and color $u$ with color $1$. If $v$ is colored with color $1$ and $w$ is not colored with $1$, then $v$ has two neighbors (other than $u$ and $w$) colored with $1$. Then we recolor $v$ properly and color $u$ with $1$.

Let $f=uvw$ be a special $(3,5,5)$-face.  Assume first that $f$ is an initial special $(3,5,5)$-face, that is, its two $5$-vertices have six pendant $(3,3,5^-)$ or $(3,4,4)$-faces.
We uncolor $u$ and $w$, by the argument above, each of the six 3-vertices on pendant 3-faces that adjacent to $v$ and $w$ can be recolored with $1$, then we can recolor $v$ and $w$ with $2$ and $3$, respectively,  and color $u$ with $1$. Next, assume that $f=uvw$ is a subsequent special $(3,5,5)$-face. Then by induction,  the six neighbors of $v$ and $w$ on either previous pendant special $(3,5,5)$-faces  or other pendant special $3$-faces can be recolored with $1$. Thus,  we can recolor $v$ and $w$ with $2$ and $3$, respectively, and then color $u$ with $1$.
\end{proof}

\begin{lem}\label{lem3.11}
Let $v$ be a $4$-vertex with neighbors $v_1, v_2, v_3$ and $v_4$.  Then $v$ cannot be incident to a $(3,4,5^-)$-face $f_1=v_3vv_4$ and $(3, 4, 3, 5^+)$-face $f_2=v_1vv_2w$ with $(b(f_1)\cup b(f_2))\cap C_0=\emptyset$.
\end{lem}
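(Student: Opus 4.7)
The plan is to derive a contradiction by showing that a superextendable coloring of a smaller graph extends to $G$. Without loss of generality, assume $d(v_3)=3$ (the other case is symmetric). Let $a, b, c$ denote the third neighbors of $v_1, v_2, v_3$ (those not on $f_1$ or $f_2$). Since $v_1v_2\in E(G)$ would create a triangle $v_1vv_2$ sharing $v$ with $f_1$ (violating $d^{\bigtriangledown}\ge 1$), $v_1$ and $v_2$ are nonadjacent. Set $G':=G-\{v,v_1,v_2\}$; then $G'\in\mathcal{G}$ and $\sigma(G')<\sigma(G)$, so by the minimality of $G$, $(G',C_0)$ is superextendable. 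Let $\phi$ be such a superextension.

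I extend $\phi$ to all of $G$ by coloring $v_1, v_2, v$ in order. Each of $v_1, v_2$ has only two fixed-color neighbors in $\phi$ (namely $\{w,a\}$ and $\{w,b\}$, respectively), so at least one valid choice in $\{1,2,3\}$ exists for each. After $c(v_1), c(v_2)$ are chosen, $v$ has four colored neighbors $\{v_1, v_2, v_3, v_4\}$; we pick $c(v)$ to avoid conflicts, using the color-1 deficiency of $2$ for slack. The key flexibility is this: if $v_j\in\{v_1,v_2\}$ is tentatively assigned color $1$ while both its non-$v$ neighbors are also colored $1$ (so at color-$1$ capacity), we may safely recolor $v_j$ to $2$ or $3$, since neither color appears in its two-vertex $G-v$-neighborhood. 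An analogous recoloring for $v_3$ is available through Lemma~\ref{lem3.10} whenever $f_1$ is a special $3$-face.

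The main obstacle is the sub-case where $\phi(v_4)=1$ with $v_4$ saturated in color-$1$ capacity, and the forced values $c(v_1), c(v_2), \phi(v_3)$ together with $\phi(v_4)$ cover all three colors; then $v$ is forced to color $1$, yet this would violate $v_4$'s capacity, and since $d(v_4)\le 5$, the vertex $v_4$ may have both a color-$2$ and a color-$3$ neighbor in $G'$, blocking a direct recoloring of $v_4$. To resolve this, I split on $d(v_4)$. When $d(v_4)\le 4$, $f_1$ is a $(3,3,5^-)$- or $(3,4,4)$-face and hence special, so Lemma~\ref{lem3.10} applied to $v_3$ (with pendant neighbor $c$) breaks the tangle by forcing $v_3$ to color $1$. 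When $d(v_4)=5$, $f_1$ is a $(3,4,5)$-face, and I apply Lemma~\ref{lem3.7}(2) to $f_2\in F_4$, which has the degree pattern $(4^-, 3^+, 4^-, 3^+)$, to obtain a triangle incident to $w$; then, using $w$'s high degree together with the structural rigidity imposed by this triangle, I recolor $w$ (to color $2$ or $3$) to unlock a valid extension for $v_1, v_2$ and hence for $v$. In every sub-case the extension succeeds, contradicting the choice of $(G, C_0)$ as a minimum counterexample.
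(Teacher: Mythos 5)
Your overall strategy (delete a few vertices around $v$, invoke minimality of $(G,C_0)$, then recolor locally) is the same as the paper's, which deletes $\{v,v_1,v_2,v_3\}$ rather than your $\{v,v_1,v_2\}$; that difference alone is harmless. The problem is that your treatment of the hard case does not work. First, Lemma~\ref{lem3.10} is misapplied: it presupposes a coloring of $G-\{u,u'\}$, i.e.\ the $3$-vertex \emph{and} its pendant neighbor are both uncolored, and it also requires $u'\notin C_0$. In your setting $v_3$ and its pendant neighbor $c$ are already colored by $\phi$ (and nothing prevents $c\in C_0$), so you cannot "force $v_3$ to color $1$" by that lemma; and even if you could, it would not help, since the obstruction you identified sits at the saturated vertex $v_4$ --- giving $v_3$ color $1$ only increases the number of $1$-colored neighbors of $v$ while $v_4$ remains at color-$1$ capacity. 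Second, the subcase $d(v_4)=5$ is not an argument at all: Lemma~\ref{lem3.7}(2) only yields that $w$ is incident to some triangle; it gives no recoloring power over $w$, which is a $5^+$-vertex whose neighbors may already use both colors $2$ and $3$, and no link is made between a recoloring of $w$ and the blockage at $v_4$ on the other face. Your general extension step ("pick $c(v)$ to avoid conflicts, using the deficiency for slack") also leaves unexamined configurations such as three neighbors of $v$ colored $1$, or $v_3$ itself being at color-$1$ capacity.

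What actually resolves the hard case (and is the paper's key move) needs no appeal to Lemma~\ref{lem3.10} or \ref{lem3.7} and no split on $d(v_4)$: when $v_4$ is colored $1$ and already has two neighbors (other than $v$ and $v_3$) colored $1$, use $d(v_4)\le 5$ to recolor $v_4$ with a color different from $1$ and from the color of its at most one remaining colored neighbor besides $v_3$ and the uncolored $v$; then recolor $v_3$ properly (it has only two colored neighbors, $v_4$ and its pendant neighbor), and finally color $v$ with $1$. Without this explicit recoloring of $v_4$ and $v_3$, your proposal has a genuine gap.
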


\begin{proof}
Suppose otherwise. By the minimality of $(G, C_0)$, $(G- \{v,v_1,v_2,v_3\}, C_0)$ is superextendable. Properly color $v_1, v_2$ and $v_3$.  Let both $v_1$ and $v_2$ be colored with $1$.  If one of $v_3$ and $v_4$ is colored with $1$, then color $v$ properly; if neither $v_3$ and $v_4$ is colored with $1$, then color $v$ with $1$.  Thus, we may assume that at most one of $v_1$ and $v_2$ is colored with $1$. We can color $v$ with $1$, unless $v_4$ is colored with $1$ and two neighbors (other than $v$ and $v_3$) of $v_4$ are colored with $1$ in which case we recolor $v_4$ properly  and then $v_3$  properly and color $v$ with $1$. So in either case, we have a contradiction.
\end{proof}

\begin{lem}\label{lem3.12}
Let $v\not\in C_0$ be a $5$-vertex with neighbors $v_i$, $0\le i\le 4$.  Then each of the following holds.

{\rm (1)} $v$ cannot be  incident to a $(3,5,3^+)$-face $f=v_4vv_0$ with $d(v_4')\le 4$ and $(b(f)\cup \{v_4'\})\cap C_0=\emptyset$  and adjacent to $3$ pendant special $3$-faces;

{\rm (2)} $v$ cannot be adjacent to $4$ pendant special $3$-faces;

{\rm (3)} $v$ cannot be incident to five $4$-faces from $F_4$ with at least three $(4^-, 3^+, 5, 5^+)$-faces $u_iv_ivv_{i+1}$ such that at most one of $v_i$ and $v_{i+1}$ is incident with a triangle.
\end{lem}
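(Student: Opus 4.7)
All three parts of Lemma~\ref{lem3.12} assert reducibility of a local configuration at a $5$-vertex $v \notin C_0$ having an abundance of pendant special $3$-faces or $(4^-, 3^+, 5, 5^+)$-faces. My plan follows a common three-step template: (i) delete a small set of vertices near $v$ (in (3), additionally identify non-consecutive vertices of certain $4$-faces via Lemma~\ref{lem3.6}) to obtain $G' \in \mathcal{G}$ with $\sigma(G') < \sigma(G)$ and $C_0 \subseteq G'$; (ii) invoke the minimality of $(G, C_0)$ to obtain a superextendable $(2,0,0)$-coloring of $(G', C_0)$; (iii) extend this coloring back to $G$, using Lemma~\ref{lem3.10} to load color~$1$ onto the $3$-vertices on pendant special $3$-faces, and then color $v$ in $\{2, 3\}$ (both of deficiency $0$) avoiding the remaining non-color-$1$ neighbors.

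For part (2), after deleting $v$ together with the four pairs $\{v_i, v_i'\}$ where $v_i$ is the $3$-vertex on the $i$-th pendant special $3$-face of $v$ and $v_i'$ its pendant neighbor, minimality yields a coloring of the resulting graph. I then apply Lemma~\ref{lem3.10} iteratively to each special $3$-face to force $v_1, \dots, v_4$ to receive color~$1$; since four of $v$'s five neighbors are then color~$1$ and colors $2$ and $3$ have deficiency $0$, I place $v$ at a color in $\{2, 3\}$ distinct from the color of the fifth neighbor. Part (1) proceeds identically for the three pendant special $3$-faces, except that $v_4$ (the $3$-vertex of the incident face $f = v_4 v v_0$) must also be colored at the end. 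After placing $v$ in $\{2, 3\}$ avoiding the color of $v_0$, if $v_4$ still cannot be colored then the three neighbors $v, v_0, v_4'$ of $v_4$ span $\{1,2,3\}$; the bound $d(v_4') \le 4$ together with $v_4' \notin C_0$ then permits a local recoloring at $v_4'$ (in the style of Lemma~\ref{lem3.10}) to make color~$1$ available at $v_4$.

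Part (3) replaces the color-$1$ absorption step by the identification technique of Lemma~\ref{lem3.6}. For each of the three $(4^-, 3^+, 5, 5^+)$-faces $u_i v_i v v_{i+1}$ the pair $\{v_i, v_{i+1}\}$ is non-consecutive on a $4$-face and at most one of $v_i, v_{i+1}$ is incident to a triangle, so Lemma~\ref{lem3.6} lets me identify this pair while staying in $\mathcal{G}$. Deleting $v$ and performing the three identifications yields $G'$ with $\sigma(G') < \sigma(G)$; a superextendable coloring of $G'$ pulls back to a coloring of $G - v$ in which each identified pair is monochromatic, collapsing the five neighbors of $v$ into at most two color classes (three identifications on five elements leave at least $5-3=2$ equivalence classes, and a short check over the possible cyclic arrangements of the three distinguished faces gives exactly two). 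A brief deficiency count then produces a valid color for $v$ in each case. The main obstacle I anticipate is validating the compound identification when the three $(4^-, 3^+, 5, 5^+)$-faces are consecutive around $v$: then the pairs $\{v_i, v_{i+1}\}$ cascade through shared endpoints, and I must invoke Proposition~\ref{prop3.1} together with Lemmas~\ref{lem3.2}--\ref{lem3.4} to verify that the cascade introduces neither a $5$-cycle nor a pair of intersecting triangles in the quotient graph.
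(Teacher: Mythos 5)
Your part (2) is essentially the paper's own proof (delete $v$ together with the four $3$-vertices --- note that the ``pendant neighbor'' $v_i'$ of each such $3$-vertex is $v$ itself, so no extra vertices are involved --- then load color $1$ via Lemma~\ref{lem3.10} and color $v$ properly). In part (1), however, your endgame has a hole. After you color $v_1,v_2,v_3$ with $1$ and place $v$ in $\{2,3\}$, the bad case is that the neighbors $v,v_0,v_4'$ of $v_4$ carry all three colors with the color-$1$ vertex unable to accept another $1$-neighbor. Your fix, ``a local recoloring at $v_4'$ makes color $1$ available at $v_4$,'' only works when the offending $1$-colored vertex is $v_4'$; if it is $v_0$ (whose degree is unbounded, so it need not be recolorable), recoloring $v_4'$ cannot free color $1$. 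The repair is easy --- in that subcase all colored neighbors of $v$ have color $1$, so you may give $v$ the \emph{same} color as $v_4'$ and then give $v_4$ the remaining color of $\{2,3\}$ --- or, as the paper does, do not delete $v_4$ at all and instead use $d(v_4')\le 4$ to recolor $v_4$ with $1$ and then color $v$. As written, the step fails.

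Part (3) contains the genuine gap. You delete $v$ and identify all three pairs $\{v_i,v_{i+1}\}$; but any three of the five $4$-faces around a $5$-vertex include two consecutive ones, so the pairs overlap and the identification merges three or four neighbors of $v$ into one vertex. Lemma~\ref{lem3.6} licenses only the identification of a single non-consecutive pair on one $4$-face; it does not control the cascade, and the cascade can actually leave $\mathcal{G}$: if $f_0$ and $f_1$ are both distinguished, the hypothesis ``at most one of $v_i,v_{i+1}$ is incident with a triangle'' still allows both $v_0$ and $v_2$ to lie on triangles, and after merging $v_0,v_1,v_2$ those triangles intersect at the merged vertex, so the quotient has intersecting triangles (invoking Proposition~\ref{prop3.1} and Lemmas~\ref{lem3.2}--\ref{lem3.4} does not exclude this). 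Moreover, your pull-back (``a brief deficiency count'') skips exactly the point where the hypothesis $d(u_i)\le 4$ is needed: if the identified vertex and $u_i$ are both colored $1$ and $u_i$ has a further $1$-neighbor, then splitting the pair back gives $u_i$ three neighbors of color $1$, and one must recolor $u_i$ properly, which is possible precisely because $d(u_i)\le 4$ and two of its neighbors now share color $1$. The paper sidesteps all of this: by pigeonhole two of the three distinguished faces, say $f_0$ and $f_2$, are non-consecutive around $v$, so the two pairs $\{v_0,v_1\}$ and $\{v_2,v_3\}$ are disjoint; it identifies only these two pairs in $G$ (keeping $v$), and then repairs the two bad cases by properly recoloring $u_0$ or $u_2$ and, if $v$ is also colored $1$, recoloring $v$ properly. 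You should restrict to two disjoint pairs in this way; the three-fold identification with $v$ deleted does not go through as proposed.
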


\begin{proof}
(1) Suppose on the contrary that $v$ is incident to a $(3,5,3^+)$-face $f=v_4vv_0$ with $d(v_4')\le 4$ and $(b(f)\cup \{v_4'\})\cap C_0=\emptyset$  and adjacent to $3$ pendant special $3$-faces from $F_3$. By the minimality of $G$, $(G-\{v,v_1,v_2,v_3\}, C_0)$ is superextendable.  By Lemma~\ref{lem3.10}, we recolor $v_1, v_2, v_3$ with $1$. If $v$ cannot be colored, then, without loss of generality,   $v_4$ and $v_0$ are colored with $2$ and $3$, respectively.  Since $d(v_4')\le4$, $v_4'$ can always be nicely colored with $1$ or properly colored with $2$ or $3$. So we recolor $v_4$ by $1$, then color $v$ by $2$, a contradiction.

(2) Suppose on the contrary that $v$ is adjacent to $4$ pendant special $3$-faces, and the pendant neighbors are $v_1,v_2,v_3,v_4$. By the minimality of $G$, $(G- \{v,v_1,v_2,v_3,v_4\}, C_0)$ is superextendable. By Lemma ~\ref{lem3.10}, $v_i$ with $i\in \{1,2,3,4\}$ can be colored with $1$. Then we can properly color $v$, a contradiction.

(3) Suppose on the contrary that such five 4-faces from $F_4$ exist. By the hypothesis, we assume, without loss of generality, that $f_0=u_0v_0vv_1$ and $f_2=u_2v_2vv_3$ are two 4-faces such that $d(u_j)\leq 4$ and at most one of $v_i$ and $v_{i+1}$ is incident with a triangle for $j\in \{0, 2\}$. Let $H=G[\{v_0, v_1\}, \{v_2, v_3\}]$. By Lemma~\ref{lem3.6}, $H\in\mathcal{G}$. By the minimality of $(G, C_0)$,  $(H, C_0)$ is superextendable. We now go back to color the vertices of $G$. We color $v_0$ and $v_1$ with the color of $v_0(v_1)$, and color $v_2$ and $v_3$ with the color of $v_2(v_3)$, and keep the colors of the other vertices. The coloring is valid, unless the following two cases (by symmetry) hold:  (a) both $v_0(v_1)$ and one neighbor of $u_0$ other than $v_0(v_1)$ are colored with $1$ in $H$, or (b) all of $v_0(v_1)$, $v_2(v_3)$ and $v$ are colored with $1$ in $H$ (there may be one neighbor of $u_0$ other than $v_0(v_1)$ is colored with $1$ in $H$ or one neighbor of $u_2$ other than $v_2(v_3)$ are colored with color 1 in $H$).  In the former case,  since $d(u_0)\le 4$, we can recolor $u_0$ properly. In the latter case, if one neighbor of $u_0$ other than $v_0(v_1)$ is colored with $1$ in $H$ or one neighbor of $u_2$ other than $v_2(v_3)$ are colored with $1$ in $H$, we recolor $u_0$ or $u_2$ as in the former case, and then color $v$ properly.
\end{proof}

Lemma~\ref{lem3.12} (3) tells us that if $v\not\in C_0$ is a $5$-vertex,  then it cannot be incident to five $4$-faces from $F_4$ with at least three $(3,3,5,5^+)$-faces. Moreover,  if $v$ is incident to five $4$-faces from $F_4$ with two $(3,3,5,5^+)$-faces, then it cannot be incident to a $(3,4,5,5)$-face from $F_4$.

\begin{lem}\label{lem3.13}
Let $w$ be a $6$-vertex with $h$ pendant special $3$-faces. Then $w$ cannot be incident with a $(3,3,6)$-face $f=uvw$ such that $\min\{d(u'), d(v')\}\le 4$ and $(b(f)\cup\{u',v'\})\cap C_0=\emptyset$ and $h=4$; In addition, if $\max\{d(u'), d(v')\}\le 4$, then $h\le 2$.
\end{lem}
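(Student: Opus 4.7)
The plan is to argue by contradiction, in the style of Lemmas~\ref{lem3.11} and~\ref{lem3.12}. For the first assertion, assume $w$ is incident to a $(3,3,6)$-face $f=uvw$ satisfying $\min\{d(u'),d(v')\}\le 4$ (by symmetry $d(u')\le 4$), $(b(f)\cup\{u',v'\})\cap C_0=\emptyset$, and $h=4$. Let $v_1,v_2,v_3,v_4$ be the pendant $3$-vertex neighbors of the four pendant special $3$-faces of $w$; the vertices $u,v,v_1,\ldots,v_4$ are mutually distinct (otherwise two triangles would share an edge, contradicting $G\in\mathcal{G}$), and since $d(w)=6$, $N(w)=\{u,v,v_1,v_2,v_3,v_4\}$. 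By the minimality of $(G,C_0)$, the pair $(G-\{w,u,v,v_1,v_2,v_3,v_4\},C_0)$ is superextendable, and Lemma~\ref{lem3.10} then lets us color each $v_i$ with color~$1$.

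With the four neighbors $v_1,\ldots,v_4$ of $w$ all colored~$1$ and color~$1$ having deficiency~$2$, $w$ must receive color $2$ or $3$; by the $2$--$3$ symmetry, assume $w$ receives color~$2$, so $u,v\in\{1,3\}$. Observe that ``$u=3$'' fails only if $u'=3$, and ``$u=1$'' fails only if $u'$ is colored~$1$ with two already color-$1$ neighbors; these two conditions are mutually exclusive, so each of $u$ and $v$ has at least one legal color. The only obstruction is when both $u$ and $v$ are forced to~$3$, making $uv$ monochromatic in color~$3$ (whose deficiency is~$0$); this requires $u'$ and $v'$ each to be colored~$1$ with exactly two color-$1$ neighbors.

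To resolve this obstruction, we invoke $d(u')\le 4$. Besides the uncolored $u$, the vertex $u'$ has at most three colored neighbors; two are colored~$1$, so at most one carries color~$2$ or~$3$. Hence at least one of the colors $2,3$ is missing from $N(u')$, and we recolor $u'$ with that color. Then $u=1$ becomes valid; set $v=3$. The neighbors of $v$ are $w=2$, $u=1$, and $v'=1$, all different from~$3$, and $v$ is not adjacent to the recolored $u'$ since $u'\ne v'$ (otherwise $uu'v$ and $uvw$ would be intersecting triangles, again contradicting $G\in\mathcal{G}$). This produces a superextendable coloring of $(G,C_0)$, a contradiction.

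For the second assertion, suppose $\max\{d(u'),d(v')\}\le 4$ and, toward contradiction, $h\ge 3$. Choose three pendant special $3$-faces and let $v_1,v_2,v_3$ be their pendant $3$-vertex neighbors; let $v_4$ denote the remaining neighbor of $w$. By minimality, $(G-\{w,u,v,v_1,v_2,v_3\},C_0)$ is superextendable, and Lemma~\ref{lem3.10} colors $v_1,v_2,v_3$ with~$1$. Since $w$ already has three color-$1$ neighbors, $w\ne 1$; and if $v_4\in\{2,3\}$, then $w$ must take the other color, so by the $2$--$3$ symmetry we may assume $w=2$. The analysis of $u$ and $v$ then proceeds exactly as in the first assertion, and the bad case is again handled by recoloring $u'$ (which has $d(u')\le 4$ by hypothesis). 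The main technical obstacle throughout is the forced case $u=v=3$; it is resolved by the low degree of $u'$ (which guarantees an available color for its recoloring) together with the no-intersecting-triangles hypothesis on $G$ (which ensures $u'\ne v'$, so that $v$ remains non-adjacent to the recolored $u'$).
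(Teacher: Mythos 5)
Your proof is correct, and for the first assertion it is essentially the paper's argument: delete $N(w)\cup\{w\}$, invoke the minimality of $(G,C_0)$, color the pendant $3$-vertices with $1$ via Lemma~\ref{lem3.10}, and use $d(u')\le 4$ to recolor $u'$ when it blocks color $1$ at $u$; you merely color $w$ first and then resolve $u,v$, whereas the paper colors $u$ with $1$ and then colors $v,w$ properly.

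For the second assertion your bookkeeping differs slightly from the paper's. The paper keeps $w$ colored, deletes only $\{u,v,w_1,w_2,w_3\}$, colors \emph{both} $u$ and $v$ with $1$ (this is exactly where both bounds $d(u')\le 4$ and $d(v')\le 4$ are used), and then recolors $w$ properly since five of its neighbors carry color $1$. You instead delete $w$ as well, give it a color of $\{2,3\}$ avoiding the color of the sixth neighbor, and then repeat the first-part analysis, fixing the unique bad configuration by recoloring $u'$ alone; the hypothesis $u'\ne v'$ (no intersecting triangles) keeps the recolored $u'$ away from $v$, as you note. A consequence is that your argument only invokes $d(u')\le 4$, so it in fact yields the slightly stronger conclusion that $\min\{d(u'),d(v')\}\le 4$ already forces $h\le 2$; this is compatible with (and subsumes) the lemma as stated, and the discharging in Section~\ref{discharging} would only benefit. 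Both routes share the same skeleton (minimal counterexample, Lemma~\ref{lem3.10}, local recoloring of a low-degree pendant neighbor), and like the paper you apply Lemma~\ref{lem3.10} simultaneously to several pendant special faces after deleting a larger vertex set than in its statement, an informality the paper itself commits in Lemmas~\ref{lem3.12} and~\ref{lem3.13}.
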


\begin{proof}
 Suppose to the contrary that $w$ is incident to a $(3,3,6)$-face such that $d(u')\le 4$ and $(b(f)\cup \{u',v'\})\cap C_0=\emptyset$ and adjacent to four pendant special $3$-faces. Let  $N(w)=\{u, v, w_1, w_2, w_3, w_4\}$. By the minimality of $G$,  $(G- (N(w)\cup \{w\}), C_0)$ is superextendable. If $h=4$, then by Lemma~\ref{lem3.10}, $w_i$ can be colored by $1$ for $i\in \{1,2,3,4\}$. Since $d(u')\le 4$, $u'$ can be nicely colored. Then we color $u$ by $1$ and color $v$ and $w$ properly to get a desired coloring of $G$, a contradiction.

 Assume that $\max\{d(u'),d(v')\}\le 4$ and $h\ge 3$. As $d(v')\le 4$, $v'$ is nicely colored. Thus we color $v$ with $1$.  Let $w_4$ be the vertex that may not be on a special $3$-face. By the minimality of $G$, $(G-\{u, v, w_1, w_2, w_3\}, C_0)$ is superextendable. As in the proof above, $w_i$ can be colored with $1$ for $i\in \{1,2,3\}$. Since $d(u')\leq 4$, $u$ can be colored with $1$. Then all neighbors of $w$ except $w_4$ are colored with $1$. So we properly color $w$ to get a coloring of $G$, a contradiction again.
\end{proof}

\section{Discharging Procedure}\label{discharging}
In this section, we will finish the proof of the main theorem by a discharging argument.  Let the initial charge of vertex $u\in G$ be $\mu(u)=2d(u)-6$, and the initial charge of face $f\not=C_0$ be $\mu(f)=d(f)-6$ and $\mu(C_0)=d(C_0)+6$. Then
$$ \sum_{u\in V(G)} \mu(u)+ \sum_{f\in F(G)} \mu(f)=0.$$

Let $h$ be the number of pendant special $3$-faces of a vertex $u$.\\

The discharging rules are as follows.

\begin{enumerate}[(R1)]
\item Let $u\not\in C_0$. Then in (R1.1)-(R1.5) $u$ gives charges only to incident or pendant faces that are disjoint from $C_0$, in the following ways: 
\begin{enumerate}
\item[(R1.1)]  $d(u)=4$.

\begin{enumerate}
\item[(R1.1.1)] $u$ gives $\frac{3}{2}$ to each incident $(3,4,5^-)$-face, and $1$ to other incident $3$-faces.

\item[(R1.1.2)] $u$ gives $1$ to the incident $(3,4,3,5^+)$-face if $u$ is incident to a $3$-face, otherwise gives $\frac{1}{2}$ to each incident $4$-face.

\end{enumerate}

\item[(R1.2)] $d(u)=5$

\begin{enumerate}
\item[(R1.2.1)] $u$ gives $1$ to the incident $3$-face if $h=3$, $\frac{3}{2}$ if $h=2$, and $2$ if $h\le 1$.

\item[(R1.2.2)] $u$ gives $1$ to each incident $(3,3,5,5^+)$-face, and if $u$ is incident with a $3$-face then $1$ to each incident $4$-face; if $u$ is not incident with a $3$-face, then $u$ gives $\frac{3}{4}$ to each incident $(3,4,5,5)$-face, and $\frac{2}{3}$ to each other incident $4$-faces.

\end{enumerate}

\item[(R1.3)] $d(u)=6$, then $u$ gives $3$ if $h\le 2$, $\frac{5}{2}$ if $h=3$, and $2$ if $h=4$, to each incident $3$-face.

\item[(R1.4)] $7^+$-vertex gives $1$ to each  pendant $3$-face; $5$- or $6$-vertex gives $1$ to each special pendant $3$-face and $\frac{1}{2}$ to each of the other pendant $3$-faces.

\item[(R1.5)] $6^+$-vertex gives $1$ to each incident $4$-face, and $7^+$-vertex gives $3$ to each incident $3$-face.

\item[(R1.6)] $4^+$-vertex which is incident to a triangle gives $\frac{1}{2}$ to each incident $4$-face from $F_4'$.
\end{enumerate}

\item If $u\in C_0$, then $u$ gives $1$ to each incident $4$-face from $F_4''$ or each pendant face from $F_3$,  $\frac{3}{2}$ to each incident face from $F_3''$ or $F_4'$, and $3$ to each incident face from $F_3'$.

\item $C_0$ gives $2$ to each $2$-vertex on $C_0$, $\frac{3}{2}$ to each $3$-vertex on $C_0$, and $1$ to each $4$-vertex on $C_0$.  In addition, if $C_0$ is a $7$-face with six $2$-vertices, then it gets $1$ from the incident face.

\end{enumerate}

We shall show that each $x\in F(G)\cup V(G)$ other than $C_0$ has final charge $\mu^*(x)\ge 0$ and $\mu^*(C_0)>0$.

First we consider faces.  As $G$ contains no $5$-faces and $6^+$-faces other than $C_0$ are not involved in the discharging procedure, we will first consider $3$- and $4$-faces other than $C_0$. \\

Let $f$ be a $3$-face.  Note that $f$ has initial charge $3-6=-3$. By Lemma \ref{lem3.4}, $|b(f)\cap C_0|\le2$. If $|b(f)\cap C_0|=1$, then  $\mu^*(f)\ge-3+3=0$ by (R2);  if $|b(f)\cap C_0|=2$, then  $\mu^*(f)=-3+\frac{3}{2}\times 2=0$ by (R2).  So we may assume that  $b(f)\cap C_0=\emptyset$.   Let $f=uvw$ with corresponding degrees $(d_1, d_2, d_3)$.

\begin{enumerate}[(1)]
\item  $f$ is a $(3,3,5^-)$-face. By Lemmas~\ref{lem3.8} and \ref{lem3.9}, the neighbors of $3$-vertices on $f$ are either on $C_0$ or have degree at least $5$. In latter case, $f$ is a special pendant $3$-face to them. Thus  each of these neighbors gives $1$ to $f$  by (R2) or (R1.4), plus the $4$- or $5$-vertex on $f$, if exists, gives at least $1$ to $f$ by (R1.1.1) or (R1.2.1),   thus $\mu^*(f)\ge -3+1\times 3=0$.

\item $f$ is a $(3,3,6)$-face. Let $w$ be the 6-vertex of $f$. If $f$ has a pendant neighbor on $C_0$, then  it gets $1$ from $C_0$ by (R2), and  gets at least $2$ from $w$ by (R1.3). Thus, $\mu^*(f)\ge -3+1+2=0$.  We now assume that $f$ has no pendant neighbors on $C_0$.  If each pendant neighbor of the $3$-vertices is of degree at most $4$, then by Lemma~\ref{lem3.13}, $w$ is adjacent to at most $2$ pendant $3$-faces. Thus $w$ gives $3$ to $f$ by (R1.3). Thus, $\mu^*(f)\ge-3+3=0$.  If one pendant neighbor is of degree at most $4$ and the other is of degree at least $5$, then $w$ is adjacent to at most $3$ pendant special $3$-faces by Lemma~\ref{lem3.13}. Thus, $w$ gives $\frac{5}{2}$ to $f$ by (R1.3) and $f$ gets $\frac{1}{2}$ from the pendant neighbor with degree at least $5$ by (R1.4), so $\mu^*(f)\ge -3+\frac{1}{2}+\frac{5}{2}=0$.  If each of the pendant neighbors is of degree at least $5$, then $f$ gets at least $2$ from $w$ by (R1.3) and $\frac{1}{2}$ from each of the pendant neighbors by (R1.4). Thus, $\mu^*(f)\ge -3+2+\frac{1}{2}\times 2=0$.

\item  $f$ is a $(3,3,7^+)$-face. Then $f$ gets $3$ from $w$ by (R1.5). Thus, $\mu^*(f)\ge-3+3=0$.

\item $f$ is a $(3,4,4)$-face. Then $f$ gets $\frac{3}{2}$ from both $v$ and $w$ by (R1.1.1). It follows that $\mu^*(f)\ge-3+\frac{3}{2}\times 2=0$.

\item $f$ is a $(3,4,5)$-face. If $u'\in C_0$, then  $f$ gets $\frac{3}{2}$ from $v$ and $1$ from both $u'$ and $w$ by (R2), (R1.1.1) and (R1.2.1), and hence $\mu^*(f)\ge -3+1+\frac{3}{2}+1=\frac{1}{2}>0$. Thus, we may assume that $u'\not\in C_0$. If $d(u')\in\{3,4\}$, then $w$ is adjacent to at most $2$ pendant special $3$-faces by Lemma~\ref{lem3.12} (1).  Thus,  $f$ gets at least $\frac{3}{2}$ from $w$ by (R1.2.1), and gets $\frac{3}{2}$ from $v$ by (R1.1.1). So, $\mu^*(f)\ge-3+\frac{3}{2}+\frac{3}{2}=0$. If $d(u')\ge 5$, then $f$ gets $\frac{1}{2}$ from $u'$ by (R1.4),   gets $\frac{3}{2}$ from $v$ by (R1.1.1), and gets at least $1$ from $w$ by (R1.2.1). Therefore, $\mu^*(f)\ge-3+\frac{3}{2}+\frac{1}{2}+1=0$.

\item $f$ is a $(3,4^+,6^+)$-face. Then $f$ gets at least $2$ from $w$ by (R1.3) and  gets at least $1$ from $v$ by (R1.1.1) or (R1.2.1). Thus, $\mu^*(f)\ge-3+1+2=0$.

\item $f$ is a $(3,5,5)$-face. If $u'\in C_0$, then  $f$ gets $1$ from $C_0$ by (R2) and gets at least $1$ from both $v$ and $w$ by (R1.2.1), thus $\mu^*(f)\ge-3+1\times3=0$. Thus, assume that $u'\not\in C_0$.  If $d(u')\in\{3,4\}$, then $v$ and $w$ each has at most $2$ pendant special $3$-faces by Lemma~\ref{lem3.12}, thus $f$ gets at least $\frac{3}{2}$ from both $v$ and $w$ by (R1.2.1), therefore $\mu^*(f)\ge-3+\frac{3}{2}\times2=0$. If $d(u')\ge 5$ and $f$ is special, then  $f$ gets at least $1$ from each of the $u', v, w$ by (R1.2.1) and (R1.4), thus $\mu^*(f)\ge-3+1\times3=0$.  If  $d(u')\ge 5$ and $f$ is not special,  then $f$ gets at least $\frac{3}{2}+1$ from $v$ and $w$ and $\frac{1}{2}$ from $u'$ by (R1.4) and (R1.2.1). Thus $\mu^*(f)\ge-3+\frac{3}{2}+\frac{1}{2}+1=0$.

\item $f$ is a $(4^+,4^+,4^+)-$face. Then $f$ gets at least $1$ from each of $u,v,w$ by (R1), thus $\mu^*(f)\ge-3+1\times3=0$.\\
\end{enumerate}

Let $f$ be a $4$-face. Let $f=uvwx$ with corresponding degrees $(d_1,d_2, d_3,d_4)$.  Note that $f$ has initial charge $4-6=-2$. By Lemma \ref{lem3.4}, $|b(f)\cap C_0|\le2$. If $|b(f)\cap C_0|=1$, say $u\in b(f)\cap C_0$, then $u$ gives $\frac{3}{2}$ to $f$ by (R2). By Lemma \ref{lem3.7} each of $u$ and $w$ is incident to a triangle. So  $w$ gives $\frac{1}{2}$ to $f$ by (R1.6). So $\mu^*(f)\ge-2+\frac{3}{2}+\frac{1}{2}=0$. If $|b(f)\cap C_0|=2$, then  $\mu^*(f)=-2+1\times2=0$ by (R2).  Thus,  we now assume that $b(f)\cap C_0=\emptyset$.

Note that by Lemma~\ref{lem3.7}, we only need to consider the following situations.

\begin{enumerate}[(1)]
\item $f$ is a $(3,3,5^+,5^+)$-face. Then $f$ gets at least $1$ from both $w$ and $x$ by (R1.2.2) and (R1.5). Thus, $\mu^*(f)\ge-2+1\times2=0$.

\item $f$ is a $(3,4^+,3,5^+)$-face. Then both $v$ and $x$ are incident to triangles by Lemma~\ref{lem3.7}. Thus $f$ gets at least $1$ from both $v$ and $x$ by (R1.1.2) and (R1.2.2) and (R1.5). It follows that   $\mu^*(f)\ge-2+1\times2=0$.

\item $f$ is a $(3,4^+,4,5^+)$-face.  Then both $v$ and $x$ are incident to triangles by Lemma~\ref{lem3.7}. Thus  $f$ gets at least $1$ from $x$ and $\frac{1}{2}$ from each of $v$ and $w$ by (R1.1.2) and (R1.2.2) and (R1.5). It follows that   $\mu^*(f)\ge-2+1+\frac{1}{2}\times2=0$.

\item $f$ is a $(3,4,5^+,5^+)$-face. If $d(x)=d(w)=5$, then  $f$ gets $\frac{1}{2}$ from $v$ by (R1.1.2), and gets at least $\frac{3}{4}$ from both $w$ and $x$ by (R1.2.2). This implies that $\mu^*(f)\ge-2+\frac{3}{4}\times2+\frac{1}{2}=0$. Otherwise,   $\mu^*(f)\ge -2+\frac{1}{2}+\frac{2}{3}+1>0$ by (R1.1.2), (R1.2.2) and (R1.5).

\item $f$ is a $(3,5^+,5^+,5^+)$-face. Then $f$ gets at least $\frac{2}{3}$ from each of the $5^+$-vertices by (R1.2.2) and (R1.5), thus $\mu^*(f)\ge-2+\frac{2}{3}\times3=0$.

\item $f$ is a $(4^+,4^+,4^+,4^+)$-face. Then $f$ gets at least $\frac{1}{2}$ from each of the four vertices by (R1.1.2), (R1.2.2) and (R1.5). Thus, $\mu^*(f)\ge-2+\frac{1}{2}\times4=0$.\\
\end{enumerate}

Now we consider vertices.  Note that $int(C_0)$ contains no $2^-$-vertices.  For a vertex $u$, let $p$ be the number of $4$-faces incident with $u$, $q$ be the number of pendant $3$-faces adjacent to $u$ and $r$ be the number of $3$-faces incident with $u$.

First let $u\not\in C_0$. Note that if $d(u)=3$ then $u$ is not involved in the discharging process thus $\mu^*(u)=\mu(u)=0$.

\begin{enumerate}[(1)]

\item $d(u)=4$.   If $u$ is not incident with any 3-face, then $u$ is incident with at most four $4$-faces, thus $\mu^*(u)\ge 2-\frac{1}{2}\times4=0$ by (R1.1.2). Thus, we may assume that  $u$ is incident with a 3-face. In this case,  $u$ is incident with a 3-face and at most one $4$-face by our assumption.  If the 3-face is not a $(3,4,5^-)$-face or the $4$-face is not $(3,4,3,5^+)$-face,
then  $u$ gives at most $\max\{1+1, \frac{3}{2}+\frac{1}{2}\}=2$ to the 3-face and the $4$-face by (R1.1.1), (R1.1.2) and (R1.6), thus $\mu^*(u)\ge 2-2=0$. If the 3-face $f_1$ is a $(3,4,5^-)$-face and the $4$-face $f_2$ is a $(3,4,3,5^+)$-face, then one of the vertices on the faces must be on $C_0$ by Lemma~\ref{lem3.11}. By (R1.1.1) and (R1.6), either $u$ only gives at most $\frac{1}{2}$ to the $f_2$ (in this case $|b(f_1)\cap C_0)|\ge1$) or $u$ gives  $\frac{3}{2}$ to $f_1$ and at most $\frac{1}{2}$ to $f_2$ (in this case $|b(f_1)\cap C_0|=0$), thus $\mu^*(u)\ge 2-\frac{3}{2}-\frac{1}{2}=0$.

\item $d(u)=5$.  Assume first that $u$ is incident with a 3-face. In this case,  $u$ is adjacent at most three  pendant 3-faces or at most two incident 4-faces but not both. If $u$ is incident with two 4-faces, then $\mu^*(u)\ge4-2-1\times 2=0$ by (R1.2.1), (R1.2.2) and (R1.6). If $u$ is incident with one 4-face, then $u$ is incident to at most one special 3-face. In this case,  $\mu^*(u)\ge4-2-1-1=0$ by (R1.2.1), (R1.2.2) and (R1.6). If $u$ is not incident with 4-face, let $u$ be adjacent to $h$  pendant special 3-faces. Then $u$ is adjacent to at most $3-h$ pendant 3-faces (not special). If $h=3$, then  $\mu^*(u)\ge 4-1-3\times 1=0$ by (R1.2.1), (R1.2.2) and (R1.6); if $h=2$, then  $\mu^*(u)\ge 4-\frac{3}{2}-2\times 1-\frac{1}{2}=0$ by (R1.2.1), (R1.2.2) and (R1.6);  if $h=1$, then $\mu^*(u)\ge 4-2-2\times \frac{1}{2}-1=0$ by (R1.2.1), (R1.2.2) and (R1.6);  if $h=0$, then $\mu^*(u)\ge 4-2-3\times\frac{1}{2}=\frac{1}{2}>0$ by (R1.2.1), (R1.2.2) and (R1.6).  Thus, we may assume that $u$ is not incident with any 3-face. If $u$ is incident with five 4-faces, then  $u$ is incident with at most two $(3,3,5,5^+)$-faces by Lemma~\ref{lem3.12} (3). Moreover, if $u$ is incident with two $(3,3,5,5^+)$-faces, it cannot be incident with $(3,4,5,5)$-faces. Thus,  if $u$ is incident with at most one $(3,3,5,5^+)$-face, then $\mu^*(u)\ge 4-1-\frac{3}{4}\times4=0$ by (R1.2.2); if $u$ is incident with two $(3,3,5,5^+)$-faces, then  $\mu^*(u)\ge 4-1\times2-\frac{2}{3}\times3=0$ by (R1.2.2). If $u$ is incident with $k(1\le k\le4)$ $4$-faces, then $u$ is adjacent to at most $4-k$ pendant 3-faces. So $\mu^*(u)\ge 4-k-(4-k)=0$ by (R1.2.2) and (R1.4). If $u$ is not incident with any $4$-face, then  it is adjacent to at most three pendent special $3$-faces by Lemma \ref{lem3.12}(2). So $\mu^*(u)\ge 4-3-\frac{1}{2}\times2=0$ by (R1.4).

\item $d(u)=6$.  If $u$ is incident with a 3-face, then $u$ is incident with at most three 4-faces or adjacent to at most four pendant 3-faces but not both. Thus, we need to consider the three cases when  $h\in \{4,3\}$ or $h\leq 2$. So $\mu^*(u)\ge 6-\max\{2+4, \frac{5}{2}+3+\frac{1}{2}, 3+1\times 3\}=0$ by (R1.3),(R1.5) and (R1.6).  If $u$ is not incident with any 3-face, then  $\mu^*(u)\ge 6-1\times 6=0$ by (R1.3) and (R1.5).

\item $d(u)\ge7 $. Since $u$ is incident with at most one 3-face, $r\leq 1$. So $u$ gives at most $1$ to each incident $4$-face and each pendant $3$-face, and $3$ to each of the $r$ incident 3-faces by (R1.4),(R1.5) and (R1.6). Thus $\mu^*(u)\ge 2d(u)-6-(p+q+3r)\ge2d(u)-6-(p+q+2r+1)>2d(u)-6-(d(u)+1)\ge 0$.
\end{enumerate}

Now we consider the case that $u\in C_0$. For $l=3,4$,  each $l$-face $f$ in $G$ satisfies that $|b(f)\cap C_0|\le2$ by Lemma \ref{lem3.4} and furthermore, when $|b(f)\cap C_0|=2$, $f$ and $C_0$ share a common edge.

\begin{enumerate}[(1)]
\item $d(u)=2$.  Then $\mu^*(u)=2\times 2-6+2=0$  by (R3).

\item $d(u)=3$. Then $u$ is not incident with a face from $F_3'$ or $F_4'$.  So $\mu^*(u)\ge -\frac{3}{2}+\frac{3}{2}=0$ by (R2).

\item $d(u)=4$. Assume first that $u$ is incident with a $3$-face $f$. If $f\in F_3'$, then $\mu^*(u)=2-3+1=0$ by (R2) and (R3). If $f\in F_3''$, then it is incident to a $4$-face from $F_4''$ or adjacent to a pendent $3$-face from $F_3$, thus $\mu^*(u)\ge2-\frac{3}{2}-1+1=\frac{1}{2}>0$  by (R2) and (R3). So we may assume that $u$ is not incident with any $3$-face. By  Lemma~\ref{lem3.7}, $u$ is not incident with a face from $F_4'$. Thus we consider that $u$ is incident with $k(\le2)$ $4$-faces from $F_4''$. Then $u$ is adjacent to at most $2-k$ pendent $3$-faces from $F_3$. So $\mu^*(u)\ge2-k-(2-k)=0$ by (R2) and (R3).

\item $d(u)=k\ge 5$.  If $u$ is not incident with any $3$-face, then  $u$ is not incident with a face from $F_4'$ by Lemma~\ref{lem3.7}, so $\mu^*(u)\ge 2k-6-1\cdot (k-2)\ge1>0$ by (R2). Thus, we first assume that  $u$ is incident with a face from $F_3'$.  Let $s$ be the number of $4$-faces in $F_4'$ incident with $u$.  If $s=0$, then $\mu^*(u)\ge2k-6-(k-4)-3\geq 0$ by (R2); if $s\ge1$, then $s\le k-5$, thus $\mu^*(u)\ge2k-6-3-\frac{3}{2}s-(k-s-4)=k-\frac{1}{2}s-5\ge \frac{1}{2}$ by (R2).
  Next, we assume that $u$ is incident with a face from $F_3''$. If $s=0$,  then $\mu^*(u)\ge2k-6-(k-3)-\frac{3}{2}\ge\frac{1}{2}$ by (R2); if  $s\ge1$, then  $s\le k-4$ and $u$ may be incident with at most $k-3-s$ faces from $F_4''$, thus $\mu^*(u)\ge2k-6-\frac{3}{2}s-\frac{3}{2}-(k-s-3)=k-\frac{1}{2}s-\frac{9}{2}\ge \frac{1}{2}s-\frac{1}{2}\ge0$ by (R2).
\end{enumerate}

Now we consider the outer-face $C_0$.  Let $t_i$ be the number of $i$-vertices on $C_0$, then $d(C_0)\ge t_2+t_3+t_4$.  Note that $d(C_0)\in \{3,7\}$.  By (R3),
\begin{align*}\mu^*(C_0)&=d(C_0)+6-2t_2-\frac{3}{2}t_3-t_4\ge d(C_0)+6-\frac{3}{2}(t_2+t_3+t_4)-\frac{t_2}{2}\\
&\ge d(C_0)+6-\frac{3}{2}d(C_0)-\frac{t_2}{2}=6-\frac{d(C_0)}{2}-\frac{t_2}{2}.
\end{align*}

If $d(C_0)=3$ or $t_2\le 5$, then $\mu^*(C_0)\geq 0$. Thus, we may assume that $d(C_0)=7$ and $(t_2,t_3,t_4)\in \{(6, 1, 0), (7,0,0)\}$.  If $t_2=7$, then $G=C_0$ and it is trivially superextendable.  If $t_2=6$ and $t_3=1$, then $C_0$ gets $1$ from the adjacent face which has degree greater than $7$ by (R3), so $\mu^*(C_0)\ge \frac{1}{2}>0$.

We have shown that all vertices and faces have non-negative final charges.  Furthermore, the outer-face has a positive charge, except when $d(C_0)=7$ and $t_2=5$, in which case there must be a face other than $C_0$ having degree more than $7$, but such a face has a positive final charge, as desired.   So $\sum_{x\in V(G)\cup F(G)} \mu^*(x)>0$, a contradiction.

\section*{Acknowledgement}

The authors would like to thank Carl Yerger for him kindness to show us his manuscript and the referees for their valuable comments.

\end{document}